
\documentclass[a4paper,1p]{elsarticleNOFOOTNOTE}
\usepackage{amsmath,amssymb,anysize,float,url}
\usepackage{longtable}
\usepackage{framed}

\newtheorem{theorem}{Theorem}[section] 
\newtheorem{lemma}[theorem]{Lemma}     
\newtheorem{proposition}[theorem]{Proposition}
\newtheorem{problem}[theorem]{Problem}
\newtheorem{remark}[theorem]{Remark}
\newtheorem{example}[theorem]{Example}
\newproof{proof}{Proof}

\newcommand{\PG}{\mathsf{PG}}
\newcommand{\GF}{\mathsf{GF}}
\newcommand{\GL}{\mathsf{GL}}
\newcommand{\AGL}{\mathsf{AGL}}
\newcommand{\Aut}{\mathsf{Aut}}

\newcommand{\Sp}{\mathsf{Sp}}
\newcommand{\PGU}{\mathsf{PGU}}
\newcommand{\PGammaU}{\mathsf{P}\Gamma\mathsf{U}}

\newcommand{\SL}{\mathsf{SL}}
\newcommand{\GSp}{\mathsf{GSp}}

\newcommand{\gq}{generalized quadrangle}
\newcommand{\gqs}{generalized quadrangles}
\newcommand{\ftwkb}[1]{\mathsf{FTWKB}({#1})}
\newcommand{\kmonom}[1]{\mathsf{K}_2({#1})}
\newcommand{\kknuth}[1]{\mathsf{K}_1({#1})}
\newcommand{\fish}[1]{\mathsf{Fi}({#1})}
\newcommand{\linear}[1]{\mathsf{H}(3,{#1}^2)}
\newcommand{\pentmon}[1]{\mathsf{PM}({#1})}
\newcommand{\tr}{\mathrm{Tr}}
\newcommand\half{\tfrac{1}{2}}

\renewcommand{\le}{\leqslant}
\renewcommand{\ge}{\geqslant}

\newcommand{\bvec}[1]{\boldsymbol{#1}}
\newcommand{\allones}{\boldsymbol{j}}

\begin{document}

\begin{frontmatter}

\title{Hemisystems of small flock \gqs{}}
 
\author[uwa]{John Bamberg}
\ead{John.Bamberg@uwa.edu.au}

\author[uwa]{Michael Giudici}
\ead{Michael.Giudici@uwa.edu.au}

\author[uwa]{Gordon F. Royle}
\ead{Gordon.Royle@uwa.edu.au}

\address[uwa]{
   The Centre for the Mathematics of Symmetry and Computation,\\
   School of Mathematics and Statistics,\\
   The University of Western Australia,\\
   35 Stirling Highway, Crawley, W. A. 6009\\
  Australia}

\begin{keyword}
Hemisystem, flock \gq{}, partial quadrangle, strongly regular graph
\MSC{05B25 (primary), 05E30, 51E12 (secondary).}
\end{keyword}


\begin{abstract}
In this paper, we describe a complete computer classification of the hemisystems in the two known flock \gqs{} of order $(5^2,5)$ and give numerous further examples of hemisystems in all the known flock \gqs{} of order $(s^2,s)$ for $s \le 11$. By analysing the computational data, we identify two possible new infinite families of hemisystems in the classical \gq{} $\linear{s}$.
\end{abstract}

\end{frontmatter}

\section{Introduction}\label{section:intro}

A \textit{hemisystem of lines} of a \gq{} of order $(s^2,s)$ is a set $\mathcal{H}$ of lines such that every point $P$ is incident with $(s+1)/2$ elements of $\mathcal{H}$; that is, exactly half of the lines incident with each point lie in $\mathcal{H}$. The complementary set of lines to a hemisystem is also a hemisystem that may or may not be equivalent under the automorphism group of the \gq{} --- if it is equivalent to its complement then we call it {\em self-complementary}.  Hemisystems give rise to various other combinatorial objects, including partial quadrangles (Cameron \cite{Cameron75}), strongly regular graphs with certain parameters, and $4$-class imprimitive cometric $Q$-antipodal association schemes\footnote{In fact, these cometric association schemes have Krein array $\{(q^2+1)(q+1),(q^2-q+1)^2/q,(q^2-q+1)(q-1)/q,1;1,(q^2-q+1)(q-1)/q,(q^2-q+1)^2/q,(q^2+1)(q-1)\}$.} that are not metric (see van Dam, Martin and Muzychuk  \cite{MartinMuzychukvanDam}), all of which were thought to be somewhat rare.

The notion of a hemisystem was introduced in 1965 by Segre \cite{Segre65} in his work on \textit{regular systems} of the Hermitian surface, and he proved that  there is a unique hemisystem of lines (up to equivalence) of the classical \gq{} $\linear{3}$. It was long thought that this was the only hemisystem in $\linear{q}$ and indeed Thas \cite{Thas95} conjectured this as late as 1995. However, forty years after Segre's seminal paper, Cossidente and Penttila \cite{CossidentePenttila05} constructed an infinite family of hemisystems of the classical quadrangles $\linear{q}$ and other authors subsequently constructed sporadic examples in $\linear{q}$ \cite{BKLP07,CossidentePenttila09} and a single example in the non-classical \gq{} $\ftwkb{5}$ (see \cite{BambergDeClerckDurante09}). The first main result of this paper extends the complete classification of hemisystems to the (known) \gqs{} of order $(5^2,5)$.

\begin{theorem}
\label{thm:q=5} 
A hemisystem of the classical \gq{} $\linear{5}$ is equivalent to one of the two self-complementary hemisystems described in Table~\ref{tab:H35} and a hemisystem of the Fisher-Thas-Walker-Kantor-Betten \gq{} $\ftwkb{5}$ is equivalent to one of the three complementary pairs described in Table~\ref{tab:ftwkb5}. 
\end{theorem}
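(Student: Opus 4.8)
The plan is to carry out an exhaustive computer search, organised so that the search space is small enough to be tractable but the argument remains a genuine classification rather than a heuristic. First I would fix, for each of the two \gqs{} $\linear{5}$ and $\ftwkb{5}$ of order $(25,5)$, a concrete model: a point/line incidence structure on the $(s^2+1)(s^3+1) = 3276$ points and the same number of lines, each line carrying $s^2+1 = 26$ points and each point on $s+1 = 6$ lines. A hemisystem is then a $0/1$ vector indexed by the lines, with exactly $3$ ones among the six lines through every point; equivalently it is an exact-cover-type constraint system, or a clique/independent-set problem in the associated graph. Since a direct search over $\binom{?}{\cdot}$ subsets is hopeless, the essential device is to use a large subgroup $G$ of the automorphism group to seed the search: one enumerates, up to conjugacy, the subgroups $H \le G$ of order divisible by a suitable prime (here the relevant primes dividing $|\Aut|$), and for each such $H$ looks only for $H$-invariant hemisystems. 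An $H$-invariant hemisystem is a union of $H$-orbits on lines, so the search collapses to a $0/1$ problem of dimension equal to the number of line-orbits, which is small; this is solved by an exact integer/Boolean solver (e.g. Gurobi or a SAT solver) that is run to provable completeness, returning every solution.

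The second ingredient is to argue that this orbit-restricted search in fact finds \emph{every} hemisystem, not merely the highly symmetric ones. For this I would invoke the standard counting/averaging argument: partition the (finite) set of all hemisystems into $\Aut$-orbits; if $\mathcal H$ lies in an orbit of size $N$, then by the orbit-counting lemma the stabiliser $\Aut_{\mathcal H}$ has order $|\Aut|/N$, and if $N$ is not divisible by a given prime $p \mid |\Aut|$ then $p \mid |\Aut_{\mathcal H}|$, so $\mathcal H$ is invariant under some subgroup of order $p$. Choosing $p$ (or a small set of primes) so that every $\Aut$-orbit has size coprime to at least one chosen prime --- or, failing that, covering the remaining "bad" orbit-sizes by a direct but still feasible search --- lets one certify that the union of the orbit-restricted searches over $p$-subgroups (and their overgroups, to prune) is exhaustive. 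In practice one runs the search for each prime $p$ dividing $|\Aut(\linear 5)|$ and $|\Aut(\ftwkb 5)|$ respectively, collects all solutions, reduces them modulo $\Aut$ using canonical forms (nauty/bliss on a suitable coloured graph, or orderly generation), and reads off the inequivalent hemisystems: two for $\linear 5$ and three complementary pairs for $\ftwkb 5$, matching Tables~\ref{tab:H35} and~\ref{tab:ftwkb5}. Self-complementarity is then checked directly by testing whether $\mathcal H$ and its complement lie in the same $\Aut$-orbit.

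The main obstacle I anticipate is the case of hemisystems whose full stabiliser is \emph{trivial} (or a $2$-group of order too small to help), for then the averaging argument forces no prime-order symmetry and the orbit search misses them. The remedy --- and the real content of the computation --- is a bound showing such stabiliser-poor hemisystems cannot exist, or can be enumerated by a separate bounded backtracking search. Concretely, one would argue that if $\Aut_{\mathcal H}$ has small order then $\mathcal H$ still restricts nontrivially on some point-line subconfiguration (a line and the $25$ points on it, or a spread-like substructure), propagate the half-incidence constraints locally to bound the number of extensions of a partial hemisystem, and thereby make a full backtracking search over line-choices feasible without any symmetry assumption --- as a single unconditional run it both confirms the list and disposes of the asymmetric case. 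Getting this unconditional search to terminate in reasonable time (effective branching and constraint propagation on the $3276$-line structure) is the delicate part; everything else is bookkeeping with the automorphism groups and canonical-form reduction.
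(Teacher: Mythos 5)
There is a genuine gap: your central device (restricting to $H$-invariant hemisystems for prime-order subgroups $H$, justified by the orbit-size averaging argument) cannot certify completeness, and you effectively concede this. The averaging argument only helps for $\Aut$-orbits whose size misses some prime dividing $|\Aut|$; an orbit of hemisystems with trivial stabiliser has size $|\Aut|$ and is invisible to every such prime, and you cannot ``choose primes so that every orbit size is coprime to one of them'' because the orbit sizes are precisely what you do not know in advance --- the argument is circular as a completeness certificate. So the entire burden of the classification falls on your fallback, an unconditional search with no symmetry assumption, and for that you offer only a vague hope that local constraint propagation makes backtracking over the full line set feasible. That unresolved step is exactly the content of the theorem, and it is where the paper does its real work: the published proof does not use invariance under prescribed subgroups at all for $q=5$ (the paper explicitly reserves that incomplete method for the larger quadrangles). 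Instead it runs a single unconditional integer-programming search made feasible by two concrete devices you are missing: (i) a semi-automated ``essential orbit'' argument --- using line-transitivity and then orbits of the setwise stabiliser of the already-fixed lines, with infeasibility runs showing each new orbit must meet the hemisystem --- which fixes a specific set of $8$ lines that may be assumed to lie in $\mathcal H$ without loss of generality; and (ii) a precomputation showing that any hemisystem meeting either known hemisystem $\mathcal H_1$, $\mathcal H_2$ in at least $258$ lines is equivalent to a known one (by maximising the intersection, then enumerating the cases $306, 300, 282, 270, 258$), which licenses excluding each solution found in the main search with the strong cut $\sum_{\{i \mid h_i=1\}} x_i \le 257$ rather than the weak ``differ somewhere'' cut. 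Without devices of this kind your unconditional search has no realistic prospect of terminating, so your proposal as written does not establish the classification.

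Two smaller points. First, your parameter bookkeeping is off: a \gq{} of order $(s^2,s)$ with $s=5$ has $(s^2+1)(s^3+1)=3276$ points but only $(s+1)(s^3+1)=756$ lines, not ``the same number''; the search space is over the $756$ lines. Second, your plan of enumerating subgroups and reducing solutions by canonical forms is perfectly reasonable as a way to \emph{find} hemisystems (and is essentially what the paper does for $s\ge 7$, where it openly states the searches are incomplete), but for the $q=5$ theorem the issue is proving there are no others, and that is exactly what the orbit-restricted strategy cannot deliver on its own.
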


All known \gqs{} of order $(s^2,s)$, $s$ odd, arise from flocks of the quadratic cone and hence are called \textit{flock \gqs{}}. In \cite{BambergGiudiciRoyle1} we gave a general construction for hemisystems that produces a hemisystem in every flock \gq{}, known or unknown. In fact (as pointed out to us by Tim Penttila), our construction shows that the number of hemisystems in any infinite family of flock \gqs{} grows exponentially with the size of the \gq{}. Therefore, far from being rare, hemisystems and their associated partial quadrangles, strongly regular graphs etc. actually exist in great profusion.
Of course this is an asymptotic result only, and so in this companion paper to \cite{BambergGiudiciRoyle1}, we consider hemisystems in the small (known) flock \gqs{}, namely those of order $(s^2,s)$ for all (odd) $s \le 11$. Using a mixture of computation and analysis driven by the computational data, we discover large numbers of new hemisystems that do not arise from our general construction. Apart from the smallest
\gqs{}, our searches all assume the existence of some group of symmetries stabilising the hemisystem and so are necessarily incomplete.

Table~\ref{knownhemis} summarises the results of our investigations, dividing the hemisystems into those of Type I arising from construction of \cite{BambergGiudiciRoyle1} which we review in Section~\ref{section:construction}, and those that do not arise from this construction. In this table, notation of the form $6 \times 2 + 2$ is used to indicate that, up to equivalence under the automorphism group of the \gq{}, there are 6  complementary pairs of hemisystems and 2 self-complementary hemisystems, for a total of 14 hemisystems.  In Theorem~\ref{thm:stab} we show that a hemisystem of Type I in a \gq{} of order $(q^2,q)$ is invariant under an elementary abelian group of order $q^2$, so one way to verify that a hemisystem is not of Type I is to show that it is not invariant under such a group.

By analysing the computational data for the classical \gqs{} $\linear{q}$, we identify patterns that suggest the existence of three possible new infinite families of 
hemisystems. For these candidate families, we extend the computations to higher values of $q$ and, based on these computations, conjecture that just two of the three candidate families continue indefinitely. These families are discussed in Section~\ref{sec:inffamilies}.

We end the paper in Section \ref{sec:probs} by discussing a number of questions and directions for future research suggested by our results.

\renewcommand{\arraystretch}{1.2}

\begin{table}
\begin{center}
\begin{tabular}{|c|c|c|c|c|}
\hline
$q$&GQ&Type I hemisystems&Other hemisystems&Total\\
\hline\hline
$3$&$\linear3$&$1$&$0$&$1$\\
\hline
$5$&$\linear5$&$2$&$0$&$2$\\
&$\ftwkb5$&$1\times 2$&$2 \times 2$&$6$\\
\hline
$7$&$\linear7$&$2$&$4$&$6$\\
&$\kmonom7$&$6 \times 2 + 2$&$6 \times 2+2$&$28$\\
\hline
$9$&$\linear9$&$3$&$4$&$7$\\
&$\kknuth9$&$3$&$2$&$5$\\
&$\fish9$&$6 \times 2 + 9$&$4 \times 2 + 5$&$34$\\
\hline
$11$&$\linear{11}$&$6$&$1 \times 2 + 5$&$13$\\
&$\ftwkb{11}$&$10 \times 2$&&$20$\\
&$\fish{11}$&$42 \times 2 + 6$&$6 \times 2$&$102$\\
&$\pentmon{11}$&$ 74 \times 2 + 8$&$18 \times 2$&$192$\\
\hline
\end{tabular}
\end{center}
\caption{Known hemisystems in the flock \gqs{} of order $(s^2,s)$ for $s \le 11$}
\label{knownhemis}
\end{table}

\section{Some basic background theory}\label{section:background}

A {\em \gq{}} is an incidence structure of points and
lines such that if $P$ is a point and $\ell$ is a line not incident with
$P$, then there is a unique line through $P$ which meets $\ell$ in a
point. From this property, in the finite case, if there is a line containing at least three points or if there
is a point on at least three lines, then there are constants $s$ and $t$
such that each line is incident with $s+1$ points, and each point is
incident with $t+1$ lines. Such a \gq{} is said to have
\emph{order} $(s,t)$, and its point-line dual is a \gq{} of order $(t,s)$.

In this paper we are concerned with \gqs{} of order $(s^2,s)$, for $s$ odd. The classical example is the incidence structure of all points and lines of a non-singular
Hermitian variety in $\PG(3,q^2)$, which forms the \textit{classical} {\gq}
$\linear{q}$ of order $(q^2,q)$ (see \cite[3.2.3]{FGQ}). Further examples can be constructed from BLT-sets using the Knarr model. We briefly outline this construction below.

\subsection{Flocks of quadratic cones and BLT-sets}\label{flocks}

A \textit{flock} of the quadratic cone $\mathcal{C}$ with vertex $v$ in $\PG(3,q)$ is a
partition of the points of $\mathcal{C}\backslash\{v\}$ into conics. J. A. Thas \cite{Thas87}
showed that a flock gives rise to an elation \gq{} of order $(q^2,q)$,
which we call a \textit{flock quadrangle}.  A \textit{BLT-set of lines} of
$\mathsf{W}(3,q)$ is a set $\mathcal{O}$ of $q+1$ lines of $\mathsf{W}(3,q)$ such that no
line of $\mathsf{W}(3,q)$ is concurrent with more than two lines of $\mathcal{O}$.  
In \cite{BLT}, it was shown that, for $q$ odd, a flock of a quadratic cone in $\PG(3,q)$
gives rise to a BLT-set of lines of $\mathsf{W}(3,q)$. Conversely, a BLT-set gives rise to
possibly many flocks, however we only obtain one flock quadrangle up to isomorphism
(see \cite{PayneRogers90}). 

For $q$ odd, Knarr
\cite{Knarr92} gave a direct geometric construction of a flock quadrangle from a BLT-set
of lines of $\mathsf{W}(3,q)$.   Applying this construction to a \textit{linear} BLT-set of
lines (i.e., a \textit{regulus} obtained from field reduction of a Baer subline) of $\mathsf{W}(3,q)$, yields a \gq{} isomorphic to the classical object $\linear{q}$.

The BLT-sets of lines of $\mathsf{W}(3,q)$ have been
classified by Law and Penttila \cite{LawPenttila03} for prime powers $q$ at most $29$, and this has recently been extended by Betten \cite{Betten} to $q\le 67$. We outline the main infinite families in Section \ref{qclans}.

\subsection{The Knarr model}

The symplectic polar space $\mathsf{W}(5,q)$ of rank $3$ is the geometry arising from
taking the one-, two- and three-dimensional vector subspaces of $\GF(q)^6$ for which a given
alternating bilinear form restricts to the zero form (i.e., the \textit{totally isotropic}
subspaces). For example, one can take this alternating bilinear form to be defined by
$$\beta(\bvec{x} , \bvec{y} ) = x_1y_6-x_6y_1+x_2y_5-x_5y_2+x_3y_4-x_4y_3.$$ 
In particular $\beta(\bvec{x},\bvec{y})=\bvec{x}J\bvec{y}^T$ where 
{\small $$J=\left(\begin{array}{rrrrrr}
       0&0&0&0&0&1\\
       0&0&0&0&1&0\\
       0&0&0&1&0&0\\
       0&0&-1&0&0&0\\
       0&-1&0&0&0&0\\
       -1&0&0&0&0&0
\end{array}\right)$$}
This bilinear form
also determines a null polarity $\perp$ of the ambient projective space $\PG(5,q)$, defined by
$U\mapsto U^\perp := \{\bvec{v}\in \GF(q)^6: \beta( \bvec{u},\bvec{v} ) =0\text{ for all } \bvec{u} \in U\}$. 

The ingredients of the Knarr construction are as follows:
\begin{itemize}
\item a null polarity $\perp$ of $\mathsf{PG}(5,q)$;
\item a point $P$ of $\mathsf{PG}(5,q)$;
\item a BLT-set of lines $\mathcal{O}$ of $\mathsf{W}(3, q)$.
\end{itemize}
Note that the totally isotropic lines and planes incident with $P$ yield the quotient polar space $P^\perp/P$
isomorphic to $\mathsf{W}(3, q)$.  So we will abuse notation
and identify $\mathcal{O}$ with a set of totally isotropic planes on $P$.  Then we construct a
\gq{} $\mathcal{K}(\mathcal{O})$ as in Table \ref{tab:flockgq}.

\begin{table}[ht]
\begin{tabular}{lp{6.5cm}|lp{6.5cm}}
&Points && Lines \\
\hline
(i) &points of $\mathsf{PG}(5,q)$ not in $P^\perp$&(a)& totally isotropic planes not contained in $P^\perp$ and meeting some element of $\mathcal{O}$ in a line \\
(ii) &lines not incident with $P$ but contained in some element of $\mathcal{O}$&(b)& elements of $\mathcal{O}$\\
(iii)& the point $P$&&\\
\hline \\
\end{tabular}
\medskip
 Incidence is inherited from that of $\mathsf{PG}(5,q)$.
\caption{Knarr model for a flock \gq{}}
\label{tab:flockgq}
\end{table}

We now describe how the Knarr model leads to some obvious automorphisms of the resulting \gq{} $\mathcal{K}(\mathcal{O})$.
Let $G$ be the semisimilarity group of the form $\beta$, that is, the group of all semilinear transformations $g$ of $\GF(q)^6$ for which there exists $\lambda\in\GF(q)$ and $\sigma\in\Aut(\GF(q))$ such that $\beta(\bvec{u}^g,\bvec{v}^g)=\lambda\beta(\bvec{u}, \bvec{v})^{\sigma}$ for all $\bvec{u}, \bvec{v}\in\GF(q)^6$. Let $H$ be the group of similarities of $\beta$, that is, the group of all linear transformations that preserve $\beta$ up to a scalar. Then
$$H=\{A\in\GL(6,q)\mid AJA^T=\lambda J \text{ for some }\lambda\in\GF(q)\}\cong \GSp(6,q).$$
 Let $$J'=\begin{pmatrix}
         0&0&0&1\\
       0&0&1&0\\
       0&-1&0&0\\
       -1&0&0&0\\
 \end{pmatrix}$$ and take $P$ to be the span of $[1,0,0,0,0,0]$. Then $H_P=E\rtimes (Q\times R)$ where 
$$\begin{array}{rl}
    E &=\left\{\begin{pmatrix}
             1&0&0\\
            (J')^T\bvec{a}^T&I&0\\
            z&\bvec{a}&1
           \end{pmatrix}\Big\vert \,\, \bvec{a}\in\GF(q)^4,z\in\GF(q)\right\}\\
Q &=\left\{\begin{pmatrix}
             \lambda&0&0\\
            0&I&0\\
            0&0&\lambda^{-1}
           \end{pmatrix}\Big\vert \,\, \lambda\in\GF(q)\backslash\{0\}\right\}\cong C_{q-1}\\
R &=\left\{\begin{pmatrix}
             \lambda&0&0\\
            0&A&0\\
            0&0&1
           \end{pmatrix}\Big\vert \,\, A\in\GL(4,q), AJ'A^T=\lambda J'\right\}\cong\GSp(4,q) \end{array}$$
and $(H_P)_{\mathcal{O}}=E\rtimes (Q\times R_{\mathcal{O}})\cong E\rtimes (Q\times \GSp(4,q)_{\mathcal{O}})$. Moreover, $G_P=\langle H_P,\sigma\rangle$, where $\sigma$ is the standard Frobenius map. Note that $\langle R,\sigma\rangle \cong\Gamma\Sp(4,q)$ and acts on $E/Z(E)$ as in its natural action on a 4-dimensional vector-space over $\GF(q)$. Moreover, $(G_P)_{\mathcal{O}}=E\rtimes (Q\rtimes \langle R,\sigma\rangle_{\mathcal{O}})\cong E\rtimes (Q\rtimes \Gamma\Sp(4,q)_{\mathcal{O}})$.  The group $(G_P)_{\mathcal{O}}$ preserves the flock \gq{} $\mathcal{K}(\mathcal{O})$ and contains the subgroup $Z$ of all scalar matrices. Hence $E\rtimes \Gamma\Sp(4,q)_{\mathcal{O}}\cong (G_P)_{\mathcal{O}}/Z\leqslant\Aut(\mathcal{K}(\mathcal{O}))$. In fact,  if the flock quadrangle $\mathcal{K}(\mathcal{O})$ is not classical, then these are the only automorphisms that you get, that is, $\Aut(\mathcal{K}(\mathcal{O}))=E\rtimes \Gamma\Sp(4,q)_{\mathcal{O}}$ \cite[IV.1 and IV.2]{paynethasflockauto}. (Note: In the paper \cite{BambergGiudiciRoyle1} we incorrectly claimed that additional automorphisms could arise for the Kantor-Knuth \gqs{}.)

\section{Hemisystems of Type I and their automorphisms}\label{section:construction}

In this section we revise the construction given in \cite{BambergGiudiciRoyle1} and discuss the stabiliser of the resulting hemisystems.

\begin{lemma}[{Bamberg, Giudici and Royle \cite{BambergGiudiciRoyle1}}]\label{eqrelation}
Consider a set $\mathcal{O}$ of totally isotropic planes of $\mathsf{W}(5,q)$ each incident with a point $P$ such
that $\{\pi/P:\pi\in\mathcal{O}\}$ is a BLT-set of lines of the quotient symplectic space $P^\perp/P\cong\mathsf{W}(3,q)$. Let $\ell$ be a line of $\mathsf{W}(3,q)$ not meeting any element of $\mathcal{O}$. Define a binary relation $\equiv_\ell$ on $\mathcal{O}$ 
by setting $\pi\equiv_\ell \pi'$ if and only if
$$\pi=\pi'\quad\text{  or   }\quad\{\langle Y,Y^\perp\cap\pi\rangle\mid Y\in\ell\} \cap\{\langle Y,Y^\perp\cap\pi'\rangle\mid Y\in\ell\}=\varnothing.$$
Then $\equiv_\ell$ is an equivalence relation yielding a partition of $\mathcal{O}$ into two
parts of equal size.
\end{lemma}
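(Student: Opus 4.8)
The plan is to analyze the geometry of the configuration point-by-point along the line $\ell$. For a fixed point $Y \in \ell$ and a fixed plane $\pi \in \mathcal{O}$, since $Y \notin \pi$ (as $\ell$ meets no element of $\mathcal{O}$) and $\pi$ is a totally isotropic plane, the hyperplane $Y^\perp$ meets $\pi$ in a line (not all of $\pi$, since $Y \notin \pi^\perp = P^\perp$-type considerations apply once one checks $\pi \not\subseteq Y^\perp$). Hence $\langle Y, Y^\perp \cap \pi\rangle$ is a totally isotropic plane through $Y$. So each $\pi \in \mathcal{O}$ determines, for each $Y \in \ell$, a totally isotropic plane $\phi_Y(\pi) := \langle Y, Y^\perp \cap \pi\rangle$ on $Y$; equivalently it determines a line $\phi_Y(\pi)/Y$ of the quotient $\mathsf{W}(3,q)$ at $Y$. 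First I would verify that for fixed $Y$, the map $\pi \mapsto \phi_Y(\pi)/Y$ is injective on $\mathcal{O}$ — this should follow because distinct planes of $\mathcal{O}$ through $P$ project to distinct lines of the BLT-set in $P^\perp/P$, and the construction $\phi_Y$ is reversible (recovering $\pi$ from $\phi_Y(\pi)$ and $P$). Thus the relation $\equiv_\ell$ says: $\pi \equiv_\ell \pi'$ iff $\pi = \pi'$ or the line-sets $\{\phi_Y(\pi)/Y : Y \in \ell\}$ and $\{\phi_Y(\pi')/Y : Y \in \ell\}$ are disjoint as subsets of $\bigcup_{Y\in\ell}(\text{lines at }Y)$.

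Next I would translate the disjointness condition into a single combinatorial statement. Note that $\phi_Y(\pi)/Y = \phi_{Y'}(\pi')/Y'$ for some $Y, Y' \in \ell$ forces these two totally isotropic planes to meet, and since they each contain a point of $\ell$, a short argument should show $Y = Y'$ is forced (two totally isotropic planes meeting in a line, each containing a distinct point of the line $\ell$, would make $\langle \ell, \ldots\rangle$ too large / not totally isotropic — one checks the dimensions). So really $\pi \not\equiv_\ell \pi'$ (for $\pi \neq \pi'$) iff there exists $Y \in \ell$ with $\phi_Y(\pi) = \phi_Y(\pi')$, i.e. $\langle Y, Y^\perp \cap \pi\rangle = \langle Y, Y^\perp\cap\pi'\rangle$. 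The heart of the matter is then to understand, for each fixed $Y \in \ell$, the partition of $\mathcal{O}$ induced by the fibres of $\pi \mapsto \phi_Y(\pi)$, and to show these partitions are "compatible" across the $(q+1)$ points of $\ell$ in the precise sense that the transitive closure of their union is a partition into exactly two classes of size $(q+1)/2$.

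The key tool will be the defining property of a BLT-set together with Segre's / the standard $\pm$-sign invariant: a BLT-set of lines of $\mathsf{W}(3,q)$ is equivalently a BLT-set of points on the Klein quadric $Q^+(5,q)$, and for any point $Y$ off the associated elliptic/parabolic section one gets a partition of the $q+1$ BLT-points into two halves according to whether a certain quadratic form value is a square or a non-square (the "hemisystem-type" dichotomy underlying the whole BLT theory). Concretely, I expect that $\phi_Y(\pi) = \phi_Y(\pi')$ is governed by such a square/non-square condition attached to the pair $(Y, \ell)$, so that each $Y$ contributes a partition of $\mathcal{O}$ into two equal halves, and all these partitions coincide (or their common refinement is still into two halves) because they are all computed from the same BLT quadratic form evaluated along the conic or line $\ell$. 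I would carry this out by choosing coordinates adapted to $P$ and $\ell$ as in the Knarr setup already laid out in the excerpt (the matrices $E$, $Q$, $R$, and the form $J'$), reducing $\phi_Y$ to an explicit formula, and reading off the sign condition. The main obstacle — and the step deserving the most care — is precisely proving that the two-part partitions coming from the different points $Y \in \ell$ are mutually compatible, i.e. that $\equiv_\ell$ is genuinely transitive and does not collapse to the trivial partition or refine to more than two parts; this is where the BLT property (no line meeting three elements of $\mathcal{O}$) must be used in an essential way rather than just formally.
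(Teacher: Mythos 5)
This paper only quotes Lemma~\ref{eqrelation} from \cite{BambergGiudiciRoyle1} and contains no proof of it, so your attempt has to be judged on its own terms; it has a false first step that propagates. You claim that for fixed $Y\in\ell$ the map $\phi_Y:\pi\mapsto\langle Y,Y^\perp\cap\pi\rangle$ is injective on $\mathcal{O}$, ``recovering $\pi$ from $\phi_Y(\pi)$ and $P$''. It is not, and in fact it cannot be: since $\ell\subseteq P^\perp$ we have $P\in Y^\perp\cap\pi$, so every plane $\phi_Y(\pi)$ contains the totally isotropic line $\langle P,Y\rangle$; there are exactly $q+1$ totally isotropic planes through that line, and $\langle P,\ell\rangle$ is never one of the $\phi_Y(\pi)$ (otherwise $\pi$ would meet $\ell$ modulo $P$), so $\phi_Y$ sends the $q+1$ elements of $\mathcal{O}$ into a set of only $q$ planes and must collide. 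Recovery of $\pi$ from $\phi_Y(\pi)$ fails precisely because $\phi_Y(\pi)/P$ is a line of $\mathsf{W}(3,q)$ which the BLT condition allows to meet \emph{two} elements of the BLT-set. These collisions are not a nuisance but the entire content of the relation: as you note later, $\pi\not\equiv_\ell\pi'$ exactly when $\phi_Y(\pi)=\phi_Y(\pi')$ for some $Y$ (your reduction to $Y=Y'$ is correct: a common plane would contain $\ell$ and a line of $\pi$, which, being coplanar, would meet, contradicting $\ell\cap\pi=\varnothing$). If $\phi_Y$ were injective for every $Y$, the relation would be universal with a single class, contradicting the statement being proved.

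The deeper gap is that you mischaracterise the per-$Y$ data and then defer the crux to an expectation that is not correct as stated. The fibres of $\phi_Y$ have size at most $2$ by the BLT property, and in fact exactly $2$ (dually, the conics $b_i^\perp\cap b_j^\perp\cap\mathsf{Q}(4,q)$, $j\ne i$, partition the cone $b_i^\perp\cap\mathsf{Q}(4,q)$ minus its vertex, so every point of $\mathsf{Q}(4,q)$ off the BLT-set is collinear with $0$ or $2$ of its points). Hence each $Y$ hands you a \emph{perfect matching} of $\mathcal{O}$ into $(q+1)/2$ pairs, not ``a partition of $\mathcal{O}$ into two equal halves'', and there are no per-$Y$ bipartitions to be made compatible; what must be proved is that the union over the $q+1$ points of $\ell$ of these matchings is the complete bipartite graph between two halves of $\mathcal{O}$. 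Your square/non-square heuristic does not deliver this by itself: working dually in $\mathsf{Q}(4,q)$ with $L$ the point corresponding to $\ell$ and $b_i$ the points corresponding to the elements of $\mathcal{O}$, one finds $\pi_i\equiv_\ell\pi_j$ precisely when $c\,\beta(L,b_i)\beta(L,b_j)\beta(b_i,b_j)$ is a nonsquare ($c$ a constant of the ambient form), and the coupling factor $\beta(b_i,b_j)$ blocks any naive per-element sign. The missing ingredients are: (i) the triple identity --- the BLT condition says the same expression with any third BLT point in place of $L$ is a nonsquare, so the product of the three pair-signs of any triple is $-1$, which is what forces $\equiv_\ell$ to be an equivalence relation with at most two classes; and (ii) an equal-size count --- each of the $q+1$ points of $\ell$ contributes exactly $(q+1)/2$ matched pairs, while each non-equivalent pair is matched at exactly two points of $\ell$ (tangency is impossible since the quantity above is nonzero), so if the classes have sizes $a,b$ then $2ab=(q+1)\cdot(q+1)/2$, giving $a=b=(q+1)/2$. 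Your proposal correctly names the crux (transitivity and the two equal halves, via the BLT property) but supplies neither argument, and its concrete claims about injectivity and per-$Y$ halves are wrong.
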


\begin{theorem}[Bamberg, Giudici and Royle \cite{BambergGiudiciRoyle1}]\label{construction}
Consider a set $\mathcal{O}$ of totally isotropic planes of $\mathsf{W}(5,q)$ each incident with a point $P$ such
that $\{\pi/P:\pi\in\mathcal{O}\}$
is a BLT-set of lines of the quotient symplectic space $P^\perp/P\cong\mathsf{W}(3,q)$. 
Suppose that we have a line $\ell$ of $\mathsf{W}(5,q)$ not meeting
any element of $\mathcal{O}$, and let $\equiv_\ell$ be the binary relation on $\mathcal{O}$ defined in Lemma \ref{eqrelation} with equivalence classes
$\mathcal{O}^+$ and $\mathcal{O}^-$.
Let $\mathcal{S}$ be a subset of the totally isotropic planes 
on $\ell$ of size $(q-1)/2$, not containing $\langle P,\ell\rangle$, and let $\mathcal{S}^c$
be the complementary set of planes on $\ell$. 
Let 
\begin{enumerate}
\item[(i)] $\mathcal{L}^{+}_{\mathcal{S}}$ be the totally isotropic planes that meet some element of
$\mathcal{O}^{+}$ in a line, and which meet some element of $\mathcal{S}$ in a point; and
\item[(ii)] $\mathcal{L}^{-}_{\mathcal{S}^c}$ be the totally isotropic planes that meet some element of
$\mathcal{O}^{-}$ in a line, and which meet some element of $\mathcal{S}^c$ in a point;
\end{enumerate}
Then $\mathcal{O}^+\cup \mathcal{L}^+_{\mathcal{S}}\cup \mathcal{L}^-_{\mathcal{S}^c}$
is a hemisystem of lines of $\mathcal{K}(\mathcal{O})$.
\end{theorem}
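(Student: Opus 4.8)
The plan is to verify the defining property of a hemisystem directly. Since $\mathcal{K}(\mathcal{O})$ is a \gq{} of order $(q^2,q)$, every point lies on exactly $q+1$ lines, so it suffices to show that each of the three types of point of Table~\ref{tab:flockgq} is incident with exactly $(q+1)/2$ lines of $\mathcal{H}:=\mathcal{O}^+\cup\mathcal{L}^+_{\mathcal{S}}\cup\mathcal{L}^-_{\mathcal{S}^c}$; here $\mathcal{O}^+$ consists of lines of type (b) while $\mathcal{L}^+_{\mathcal{S}}\cup\mathcal{L}^-_{\mathcal{S}^c}$ consists of lines of type (a). First I would record a few elementary facts about incidence in the Knarr model: (1) the lines through $P$ are precisely the $q+1$ elements of $\mathcal{O}$; (2) a line of type (a) is a totally isotropic plane not inside $P^\perp$, hence does not contain $P$, hence meets exactly one element of $\mathcal{O}$ in a line, so that $\mathcal{L}^+_{\mathcal{S}}\cap\mathcal{L}^-_{\mathcal{S}^c}=\varnothing$; (3) the lines through a point $x$ of type (i) are the totally isotropic planes $\rho_\pi(x):=\langle x,x^\perp\cap\pi\rangle$, one for each $\pi\in\mathcal{O}$, all of type (a); and (4) the lines through a point $m\subseteq\pi$ of type (ii) are $\pi$ together with the $q$ remaining totally isotropic planes on $m$, all of type (a) and meeting $\mathcal{O}$ only in $\pi$ (using $m^\perp\cap P^\perp=\pi$).

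The point $P$ is then immediate: the lines of $\mathcal{H}$ through $P$ are exactly the elements of $\mathcal{O}^+$, of which there are $(q+1)/2$ by Lemma~\ref{eqrelation}. For a point $m\subseteq\pi$ of type (ii), a short dimension count --- using $m\cap\ell=0$, which holds because $\ell$ misses $\pi$ --- shows that each of the $q$ type-(a) planes $\rho$ on $m$ has $\rho\cap\ell=0$, so $\rho+\ell$ is a hyperplane and $\rho$ meets exactly one totally isotropic plane on $\ell$, necessarily in a single point; moreover that plane is never $\langle P,\ell\rangle$, and the resulting assignment is a bijection from the $q$ type-(a) planes on $m$ onto the $q$ totally isotropic planes on $\ell$ other than $\langle P,\ell\rangle$. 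Since $|\mathcal{S}|=(q-1)/2$, the lines of $\mathcal{H}$ through $m$ are $\pi$ together with the $(q-1)/2$ planes whose image lies in $\mathcal{S}$ when $\pi\in\mathcal{O}^+$, and the $(q+1)/2$ planes whose image lies in $\mathcal{S}^c\setminus\{\langle P,\ell\rangle\}$ when $\pi\in\mathcal{O}^-$; in either case there are exactly $(q+1)/2$ of them, and this holds for every admissible $\mathcal{S}$.

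The remaining, and decisive, case is a point $x$ of type (i). The $q+1$ lines through $x$ are the planes $\rho_\pi(x)$, none of type (b), and $\rho_\pi(x)\in\mathcal{H}$ if and only if either $\pi\in\mathcal{O}^+$ and $\rho_\pi(x)$ meets some element of $\mathcal{S}$ in a point, or $\pi\in\mathcal{O}^-$ and $\rho_\pi(x)$ meets some element of $\mathcal{S}^c$ in a point. When $x\notin\ell^\perp$ one checks, as for type-(ii) points, that $\rho_\pi(x)\cap\ell=0$ for every $\pi$, so that each $\rho_\pi(x)$ meets a unique totally isotropic plane $\tau_\pi(x)$ on $\ell$ in a single point, with $\tau_\pi(x)\neq\langle P,\ell\rangle$; when $x\in\ell^\perp$ some of the planes $\rho_\pi(x)$ meet several totally isotropic planes on $\ell$ and the bookkeeping is more delicate. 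In all cases the crux is a \emph{balance} statement: each totally isotropic plane $\tau$ on $\ell$ occurs as often among $\{\tau_\pi(x):\pi\in\mathcal{O}^+\}$ as among $\{\tau_\pi(x):\pi\in\mathcal{O}^-\}$ (with the appropriate interpretation in the degenerate case). Granting this, the number of lines of $\mathcal{H}$ through $x$ is the sum of these common multiplicities over all totally isotropic planes on $\ell$, which equals $|\mathcal{O}^+|=(q+1)/2$ --- here one uses that $\mathcal{S}$ and $\mathcal{S}^c$ partition the planes on $\ell$ and that $\langle P,\ell\rangle$ is never one of the $\tau_\pi(x)$ --- and, as before, the count does not depend on the admissible choice of $\mathcal{S}$.

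The balance statement is precisely what the equivalence relation $\equiv_\ell$ of Lemma~\ref{eqrelation} is built to guarantee: by definition, for $\pi\neq\pi'$, $\pi\equiv_\ell\pi'$ says that no totally isotropic plane that meets both $\pi$ and $\pi'$ in a line contains a point of $\ell$, that is, along $\ell$ the two planes $\pi,\pi'$ are ``separated''; this is exactly the balance statement restricted to points $x$ lying on $\ell$, and one propagates it to an arbitrary type-(i) point through the incidence analysis of fact (3), the partition of $\mathcal{O}$ into the two classes $\mathcal{O}^+$ and $\mathcal{O}^-$ forcing the required evenness. Establishing this balance at a general type-(i) point --- in particular handling the degenerate position $x\in\ell^\perp$, where a single $\rho_\pi(x)$ interacts with many planes on $\ell$ --- is the main obstacle; by contrast the point $P$ and the type-(ii) points reduce to routine dimension counts.
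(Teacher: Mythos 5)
This theorem is quoted from the companion paper \cite{BambergGiudiciRoyle1} and is not proved in the present paper, so there is no in-text proof to compare against; judged on its own terms, your proposal is correct in outline for two of the three point types but has a genuine gap at the third, which is exactly where the substance of the theorem lies.

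Your treatment of the point $P$ and of the type (ii) points is sound. For a type (ii) point $m\subseteq\pi$, the facts you invoke all check out: $m^\perp\cap P^\perp=\pi$ shows the $q$ planes on $m$ other than $\pi$ are of type (a) and meet $\mathcal{O}$ only in $\pi$; each such plane $\rho$ satisfies $\rho\cap\ell=0$, forces $\dim(\rho\cap\ell^\perp)=1$ (a $2$-dimensional intersection would create a totally isotropic $4$-space), and hence meets a unique plane $\tau$ on $\ell$ with $\tau\neq\langle P,\ell\rangle$; and the map $\rho\mapsto\tau$ is a bijection because, conversely, $\tau\cap m^\perp$ is $1$-dimensional and disjoint from $m$. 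The resulting count of $(q+1)/2$ is correct in both cases $\pi\in\mathcal{O}^\pm$.

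The gap is at the type (i) points. You correctly reduce the theorem there to the balance statement that each plane $\tau$ on $\ell$ is hit equally often by $\{\tau_\pi(x):\pi\in\mathcal{O}^+\}$ and $\{\tau_\pi(x):\pi\in\mathcal{O}^-\}$, but you then write ``granting this'' and never prove it; the sentence about ``propagating'' it from points of $\ell$ via fact (3) is not an argument. Worse, the proposed source of the balance is a misreading of Lemma~\ref{eqrelation}. The planes $\langle Y,Y^\perp\cap\pi\rangle$ appearing there all contain $P$ and lie inside $P^\perp$, so they are \emph{not} lines of $\mathcal{K}(\mathcal{O})$, and the points $Y$ of $\ell$ are not points of $\mathcal{K}(\mathcal{O})$ of any of the three types; the relation $\equiv_\ell$ lives entirely in the quotient $P^\perp/P\cong\mathsf{W}(3,q)$ and records which pairs of BLT-lines share their unique transversal through a point of the projected line $\ell'$. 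Translating that information into a statement about the planes $\tau$ on $\ell$ (which stick out of $P^\perp$) and the GQ-lines $\rho_\pi(x)$ through an arbitrary point $x\notin P^\perp$ is the real content of the theorem, requires the BLT condition, and is precisely what is missing. Incidentally, the ``delicate degenerate case $x\in\ell^\perp$'' you flag does not occur: since $\rho_\pi(x)\cap P^\perp=x^\perp\cap\pi\subseteq\pi$ and $\pi\cap\ell=0$, every $\rho_\pi(x)$ is disjoint from $\ell$ and meets exactly one plane on $\ell$; when $x\in\ell^\perp\setminus P^\perp$ they all meet the same plane $\langle\ell,x\rangle$ and the balance is trivial. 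So you have manufactured a spurious difficulty while leaving the genuine one unaddressed.
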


Recall that Cossidente and Penttila showed that for each odd $q$, there exists a hemisystem $\mathcal{H}_q$ of $\mathsf{H}(3,q^2)$ admitting
$\mathsf{P\Omega}^-(4,q)$. It was shown in \cite{BambergGiudiciRoyle1} that these hemisystems could be constructed using Theorem \ref{construction}.
Moreover, the number of hemisystems produced by this construction grows exponentially with $q$. 
To see this, note that the number of choices of $(q-1)/2$ things from $q$ things is the binomial coefficient; asymptotically this has value
$\frac{2^{q}\sqrt {2 / \pi}}{\sqrt{q+1}}$, or basically, $\Theta(2^q / \sqrt{q})$. Whereas the automorphism group of the \gq{} is polynomial in size and hence there are exponentially many inequivalent choices.

\begin{theorem}
\label{thm:stab}
Let $\mathcal{H}$ be the hemisystem exhibited in Theorem \ref{construction} and let $G$ be the automorphism group of the \gq{} $\mathcal{K}(\mathcal{O})$. Then $G_{\mathcal{H}}$ contains $T\rtimes \Sp(4,q)_{\mathcal{O}^+,\mathcal{O}^-,\ell'}$, where $T$ is an elementary abelian group of order $q^2$ and $\ell'$ is the line of $\mathsf{W}(3,q)$ obtained by projecting $\ell$ onto $P^\perp/P$. The group $T$ acts semiregularly on the set of lines of type (a) of $\mathcal{K}(\mathcal{O})$ and fixes each line of type (b).
\end{theorem}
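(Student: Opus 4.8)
The plan is to identify the subgroup of $(G_P)_{\mathcal{O}}/Z \le \Aut(\mathcal{K}(\mathcal{O}))$ that stabilises the hemisystem $\mathcal{H}=\mathcal{O}^+\cup\mathcal{L}^+_{\mathcal{S}}\cup\mathcal{L}^-_{\mathcal{S}^c}$ built in Theorem~\ref{construction}. Recall $(G_P)_{\mathcal{O}}/Z \cong E\rtimes\Gamma\Sp(4,q)_{\mathcal{O}}$, where $E$ is the long root / Heisenberg group of order $q^5$ with centre $Z(E)$ of order $q$. The natural candidates are: within $E$, a subgroup that fixes the line $\ell$ and acts nicely on the planes through $\ell$; and within $\Sp(4,q)_{\mathcal{O}}$, the stabiliser of the additional data $\mathcal{O}^+$, $\mathcal{O}^-$ and $\ell'$. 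So the first step is to pin down exactly which elements of $E$ fix $\ell$ setwise (equivalently, fix the point $\langle P,\ell\rangle/P$ of $P^\perp/P$ on the line $\ell'$), and check that the resulting subgroup $T$ is elementary abelian of order $q^2$ — this is a direct matrix computation using the explicit form of $E$ given in the excerpt, picking the coordinates so that $\ell'$ corresponds to a fixed totally isotropic line of $\mathsf{W}(3,q)$ disjoint from the BLT-set.

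The second step is to verify that $T$ actually preserves $\mathcal{H}$. Here the key observations are: (1) $T\le E$ fixes $P$ and fixes every element of $\mathcal{O}$ (since $E$ is the elation group with centre $P$ acting trivially on the quotient $P^\perp/P$, it fixes each plane $\pi\in\mathcal{O}$, hence fixes $\mathcal{O}^+$ and $\mathcal{O}^-$ individually); in particular $T$ fixes every line of type~(b), which are exactly the elements of $\mathcal{O}$. (2) Because $T$ fixes $\ell$ and fixes $P$, it fixes $\langle P,\ell\rangle$, and one checks that $T$ fixes \emph{every} totally isotropic plane on $\ell$ — this is where the order-$q^2$ count matters, as $T$ should act trivially on the $q+1$ planes through $\ell$ while moving the points off $P^\perp$. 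Granting this, $T$ preserves both $\mathcal{S}$ and $\mathcal{S}^c$, and since the defining conditions in (i)/(ii) of Theorem~\ref{construction} (``meets some element of $\mathcal{O}^\pm$ in a line'' and ``meets some element of $\mathcal{S}$ (resp.\ $\mathcal{S}^c$) in a point'') are phrased entirely in terms of incidence with objects fixed by $T$, the sets $\mathcal{L}^+_{\mathcal{S}}$ and $\mathcal{L}^-_{\mathcal{S}^c}$ are each $T$-invariant, so $\mathcal{H}$ is $T$-invariant.

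The third step is the $\Sp(4,q)$ part. An element $g\in R\cong\GSp(4,q)$ (or its extension by $\sigma$) descends to an automorphism of $P^\perp/P\cong\mathsf{W}(3,q)$; if $g$ additionally stabilises $\mathcal{O}$ then it permutes the planes of $\mathcal{O}$, and if it fixes $\ell'$ then it permutes the planes on $\ell$ and (by the construction of $\equiv_\ell$, which is canonically defined from $\mathcal{O}$ and $\ell$) it stabilises the partition $\{\mathcal{O}^+,\mathcal{O}^-\}$. Restricting further to $\Sp(4,q)_{\mathcal{O}^+,\mathcal{O}^-,\ell'}$ — those that fix $\mathcal{O}^+$ and $\mathcal{O}^-$ individually — and working with unimodular (rather than merely similarity) transformations so that they commute appropriately with the chosen $\mathcal{S}$, we get that such $g$ fix $\mathcal{O}^+$ and preserve the pair $(\mathcal{L}^+_{\mathcal{S}}, \mathcal{L}^-_{\mathcal{S}^c})$; combined with the fact that $\Sp(4,q)_{\dots}$ normalises $T$ inside $E\rtimes\Sp(4,q)$, we conclude $G_{\mathcal{H}}\ge T\rtimes\Sp(4,q)_{\mathcal{O}^+,\mathcal{O}^-,\ell'}$. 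Finally, the semiregularity claim: $T$ acts without fixed points on the type~(a) lines because a type~(a) plane $\tau$ is not contained in $P^\perp$, so it contains a point off $P^\perp$ on which $E$ (hence $T$) acts with trivial stabiliser in $T$ — again a short explicit check with the matrices of $E$ — while we already noted $T$ fixes every type~(b) line pointwise.

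The main obstacle I expect is Step~2, specifically showing that the order-$q^2$ group $T$ fixes \emph{every} totally isotropic plane through $\ell$ (not just the point $\langle P,\ell\rangle$), since this is what makes $\mathcal{S}$ and $\mathcal{S}^c$ $T$-invariant and hence makes the whole hemisystem invariant; getting the coordinates aligned so that $\ell$ sits in a convenient position relative to the explicit $E$ in the excerpt, and then reading off both the order of $T$ and its action on the pencil of planes through $\ell$ and on the points off $P^\perp$, is the crux of the computation. Everything else is bookkeeping about which objects in the definition of $\mathcal{H}$ are fixed by which subgroup.
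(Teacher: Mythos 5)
Your overall architecture (a subgroup $T$ of $E$ fixing $\mathcal{O}$ elementwise and fixing each plane on $\ell$, plus the lifted $\Sp(4,q)_{\mathcal{O}^+,\mathcal{O}^-,\ell'}$) is the paper's, but your identification of $T$ is wrong, and this is not cosmetic. The elements of $E$ fixing $\ell$ setwise form a subgroup of order $q^3$, not $q^2$: after normalising $\ell=\langle[0,\bvec{w}_1,0],[0,\bvec{w}_2,0]\rangle$ (a reduction you omit, and which the paper justifies by showing $E$ is transitive on the $q^2$ lines of $\langle P,\ell\rangle$ not on $P$), the condition is $\bvec{w}_i(J')^T\bvec{a}^T=0$, which constrains $\bvec{a}$ to a $2$-space but leaves $z$ free. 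In particular $E_\ell$ contains the centre $Z(E)$ (the elements with $\bvec{a}=\bvec{0}$), and $Z(E)$ sends $[x_1,0,0,0,0,1]$ to $[x_1+z,0,0,0,0,1]$, so it permutes the $q$ planes of $\mathcal{R}$ regularly; hence $E_\ell$ does \emph{not} preserve $\mathcal{S}$ and does not stabilise $\mathcal{H}$. Your parenthetical ``equivalently, fix the point $\langle P,\ell\rangle/P$'' is also false, since $E$ acts trivially on $P^\perp/P$ and so every element of $E$ fixes that point. The group the theorem actually uses is the complement to $Z(E)$ in $E_\ell$ cut out by $z=0$ (equivalently, the elements of $E_\ell$ that fix each totally isotropic plane on $\ell$); with that definition your Step~2 crux does go through, precisely because for such elements $\bvec{a}\in\langle\bvec{w}_1,\bvec{w}_2\rangle^{\perp}=\langle\bvec{w}_1,\bvec{w}_2\rangle$, so $[x_1,\bvec{a},1]$ stays inside $\langle\ell,[x_1,0,0,0,0,1]\rangle$. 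As you propose it, however, the ``check that $T$ fixes every plane on $\ell$'' would simply fail.

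The semiregularity argument is also a non sequitur as sketched. Knowing that a type (a) plane contains a point off $P^\perp$ whose stabiliser in $T$ is trivial does not bound the \emph{setwise} stabiliser of the plane: an element of $T$ fixing the plane could act on its $q^2$ points off $P^\perp$ without fixed points (and since $T$ is a $p$-group and $q^2\equiv 0\pmod p$, no orbit-counting saves you). The paper instead uses the BLT hypothesis: if $t\in T$ fixes a type (a) plane $\tau$, then, since $t$ fixes the element $U\in\mathcal{O}$ that $\tau$ meets in a line, $t$ fixes the line $m=\tau\cap U$, which does not pass through $P$; an explicit computation, using that $\langle\bvec{u}_1,\bvec{u}_2\rangle$ and $\langle\bvec{w}_1,\bvec{w}_2\rangle$ are complementary (because $\ell$ misses every element of $\mathcal{O}$), shows the stabiliser in $T$ of such a line is trivial, so $T$ acts regularly on the $q^2$ lines of $U$ not on $P$ and hence $t=1$. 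You need this (or an equivalent) argument; the point-stabiliser observation alone does not deliver the semiregularity claim.
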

\begin{proof}
Consider the group
$$ E=\left\{\begin{pmatrix}
             1&0&0\\
            J^T\bvec{a}^T&I&0\\
            z&\bvec{a}&1
           \end{pmatrix} \Big\vert \,\, \bvec{a}\in\GF(q)^4,z\in\GF(q) \right\}$$
which acts on the \gq{} $\mathcal{K}(\mathcal{O})$.

Let $\mathcal{O}$ be our BLT-set, considered as a set of lines in $\mathsf{W}(3,q)$. Each $\langle \bvec{u}_1,\bvec{u}_2\rangle\in\mathcal{O}$ is identified with the 3-space $\langle P, [0,\bvec{u}_1,0],[0,\bvec{u}_2,0]\rangle$ in $V$.  Note that 
$$[0,\bvec{u},0]\begin{pmatrix}
        1&0&0\\
            J^T\bvec{a}^T&I&0\\
            z&\bvec{a}&1
\end{pmatrix} =[\bvec{u}J^T\bvec{a}^T,\bvec{u},0]$$
Hence $E$ fixes each plane on $P$ and hence each element of $\mathcal{O}$.  Moreover, given a line $\ell$ in $P^{\perp}$ that is disjoint from every element of $\mathcal{O}$, we have that $E$ fixes $\langle P,\ell\rangle$. Now $\langle P,\ell\rangle$ contains $q^2$ lines not on $P$. If we take $\ell'=\langle [0,\bvec{w}_1,0],[0,\bvec{w}_2,0]\rangle$ to be a line on $\langle \ell,P\rangle$ we see that 
$$E_{\ell'}=\left\{\begin{pmatrix}
      1&0&0\\
            J^T\bvec{a}^T&I&0\\
            z&\bvec{a}&1
\end{pmatrix} \Big\vert \,\, z\in\GF(q),\bvec{w}_1J^T\bvec{a}^T=\bvec{w}_2J^T\bvec{a}^T=0\right\}$$
which has order $q^3$. Thus $E$ acts transitively on the set of lines of $\langle P,\ell\rangle$ not on $P$ and so we may choose $\ell=\langle [0,\bvec{w}_1,0],[0,\bvec{w}_2,0]\rangle$. We let $\ell'=\langle \mathbf{w}_1,\mathbf{w}_2\rangle$, a totally isotropic line in $\mathsf{W}(3,q)$.

Let $\mathcal{R}$ be the set of totally isotropic planes on $\ell$ other than $\langle P,\ell\rangle$. Note that $E_{\ell}$ fixes $\mathcal{R}$ setwise. These planes are of the form $\langle \ell,[x_1,0,0,0,0,1]\rangle$ with $x_1\in\GF(q)$.  Let $T$ be the elementary abelian subgroup of $E_{\ell}$ of order $q^2$ consisting of all elements with $z=0$. Then 
$$[x_1,0,0,0,0,1]\begin{pmatrix}
       1&0&0\\
            J^T\bvec{a}^T&I&0\\
            0&\bvec{a}&1
\end{pmatrix} =[x_1,\bvec{a},1]$$
Since $\bvec{a}\in \langle \bvec{w}_1,\bvec{w}_2\rangle^{\perp}=\langle \bvec{w}_1,\bvec{w}_2\rangle$, it follows that $[x_1,\bvec{a},1]\in\langle \ell,[x_1,0,0,0,0,1]\rangle $ and so $T$ fixes each element of $\mathcal{R}$. 

Let $\mathcal{S}$ be a subset of size $(q-1)/2$ of $\mathcal{R}$ and $\mathcal{S}^c$ be the complementary set of totally isotropic planes of size $(q+1)/2$. Then $T$ fixes $\mathcal{S}$ and $\mathcal{S}^c$ elementwise. Hence $T$ fixes the hemisystem $\mathcal{H}=\mathcal{O}^+\cup\mathcal{L}_{\mathcal{S}}^+\cup \mathcal{L}_{\mathcal{S}^c}^-$.

Let $B\in\Sp(4,q)_{\mathcal{O}}$ and consider the element
$$X=\begin{pmatrix}
 1 & 0_{1\times 4} &0\\
 0_{4\times 1}&B&0_{4\times 1}\\
 0 &0_{1\times 4} &1
\end{pmatrix}$$
which acts on the flock \gq{} $\mathcal{K}(\mathcal{O})$. If $B\in \Sp(4,q)_{\mathcal{O}^+,\mathcal{O}^-,\ell'}$ then $X$ fixes each element of $\mathcal{R}$ setwise and hence stabilises the hemisystem $\mathcal{O}^+\cup\mathcal{L}_{\mathcal{S}}^+\cup \mathcal{L}_{\mathcal{S}^c}^-$. Thus $T\rtimes \Sp(4,q)_{\mathcal{O}^+,\mathcal{O}^-,\ell'}\leqslant G_{\mathcal{H}}$.

The lines of $\mathcal{K}(\mathcal{O})$ are the elements of $\mathcal{O}$ and the totally isotropic planes not on $P$ and meeting some element of $\mathcal{O}$ in a line. We have seen already that $T$ fixes each of the elements of $\mathcal{O}$. Now let $U=\langle P,[0,\mathbf{u}_1,0],[0,\mathbf{u}_2,0]\rangle\in\mathcal{O}$ and recall that $\langle \mathbf{u}_1,\mathbf{u}_2\rangle\cap \langle \bvec{w}_1,\bvec{w}_2\rangle=\{0\}$. Then
$$T_{\langle [0,\mathbf{u}_1,0],[0,\mathbf{u}_2,0]\rangle}=\left\{\begin{pmatrix}
       1&0&0\\
            J^T\bvec{a}^T&I&0\\
            0&\bvec{a}&1
\end{pmatrix}\in T\Big\vert \,\, \bvec{u}_1J^T\bvec{a}^T=\bvec{u}_2J^T\bvec{a}^T=0\right\}$$
Since such elements lie in $T$ they also satisfy $\bvec{w}_1J^T\bvec{a}^T=\bvec{w}_2J^T\bvec{a}^T=0$. If $\bvec{a}\neq \bvec{0}$, we have $\{\bvec{x}\mid \bvec{x}J^T\bvec{a}^T=0\}$ has dimension 3 but contains the complementary 2-spaces $\langle \mathbf{u}_1,\mathbf{u}_2\rangle$ and $\langle \bvec{w}_1,\bvec{w}_2\rangle$. This is a contradiction and so $T_{\langle [0,\mathbf{u}_1,0],[0,\mathbf{u}_2,0]\rangle}=1$. Thus $T$ acts regularly on the $q^2$ lines in $U$ not containing $P$, and hence acts semiregularly on the totally isotropic planes not on $P$ and meeting some element of $\mathcal{O}$ in a line. 
\qed\end{proof}

\begin{remark}
The stabiliser $G_{\mathcal{H}}$ can be larger than the group given by Theorem \ref{thm:stab}. Sometimes extra automorphisms can arise from the structure of the Knarr model. For example, if $\mathcal{S}$ were chosen to be $\{\langle \ell,[x^2,0,0,0,0,1]\rangle \mid x\in\GF(q)\}$ then the elements 
$$\begin{pmatrix}
 \lambda & 0_{1\times 4} &0\\
 0_{4\times 1}&I_{4\times 4}&0_{4\times 1}\\
 0 &0_{1\times 4} &\lambda^{-1}
\end{pmatrix}$$ will fix $\mathcal{H}$. Similarly, suitable choices of $\mathcal{S}$ may give rise to semisimilarities of $\beta$ that stabilise $\mathcal{H}$. 

Alternatively, $\linear{q}$ has more automorphisms than those arising from the Knarr model. The Cossidente-Penttila hemisystems in these \gqs{} admit at least $\mathsf{P}\Sigma\mathsf{L}(2,q^2)$.
\end{remark}

\section{Potential new infinite families of hemisystems of $\linear{q}$}
\label{sec:inffamilies}

Examination of our computational data uncovered three promising candidates for new infinite families of hemisystems of $\mathsf{H}(3,q^2)$, and in this section we investigate these possible families in more detail.

\subsection{Hemisystems that are invariant under a Singer type element}
\label{sec:metacyclic}

In this section, we present a way of viewing hemisystems of $\mathsf{H}(3,q^2)$ that are invariant under a \textit{Singer type element},
and we give some computational data which shows the existence of such hemisystems for all $q\le 29$, except (curiously) $q\in\{13,25\}$. For $q=3$ we obtain the Segre hemisystem, for $q=5$ we obtain the hemisystem invariant under $(3\cdot A_7).2$ discovered by Cossidente and Penttila \cite{CossidentePenttila05} and for $q=7,9$ the examples are given in \cite{BKLP07}. In what follows, we will work in the dual \gq{}; the points and lines of the elliptic quadric $\mathsf{Q}^-(5,q)$. A hemisystem of
lines of $\mathsf{H}(3,q^2)$ transfers to a \textit{hemisystem of points} of $\mathsf{Q}^-(5,q)$.

We begin with $\GF(q^6)$ and equip it with the following bilinear form over $\GF(q)$:
$$B(x,y) := \tr_{q^6\to q}(xy^{q^3}).$$
(Note that $ \tr_{q^6\to q}$ is the relative trace map $x\mapsto x+x^q+x^{q^2}+x^{q^3}+x^{q^4}+x^{q^5}$).
This form is symmetric and defines an elliptic orthogonal space isomorphic to $\mathsf{Q}^-(5,q)$.
Now let $\omega=\xi^{(q^3-1)(q+1)}$ where $\xi$ is a primitive element of $\GF(q^6)$. Let $K=\langle \omega\rangle$ and note that $K$
is independent of the choice of $\xi$ (it is just the set of elements $x$ such that $x^{q^2-q+1}=1$). 
Then $K$ is irreducible and acts semiregularly on the totally isotropic points of $\mathsf{Q}^-(5,q)$, and is occasionally known as
a \textit{Singer type} isometry of $\mathsf{Q}^-(5,q)$.
So the number of orbits of $K$ on totally isotropic points is $(q+1)^2$. It is not difficult to see that each point orbit is of the form
$$\{\langle u\rangle \mid u^{(q^2-q+1)(q-1)}=r\},$$
where $r$ is a singular element of $\GF(q^6)^*$ such that $r^{(q+1)}\in\GF(q^3)$.
In what follows, we will use the underlying vectors instead of the projective points as the equations will be simpler.
Note that the $K$-orbits on singular nonzero vectors are each of the form
$$ \{ u\in\GF(q^6)^* \mid u^{q^2-q+1}=r\},\quad r\in R$$
where
$$R:=\{r\in\GF(q^6)\mid r^{q+1}\in\GF(q^3), \tr_{q^6\to q}(r^{q+1})=0\}.$$
The elements of $R$ lie on the mutually disjoint lines
$$\ell_a:X^{q^2}-aX=0$$
where $a$ is an element of $\GF(q^3)$ such that $a^{q+1}+a+1=0$.

So to construct a hemisystem, we need to construct a set of $\half(q+1)^2$ elements of $R$. Of the hemisystems we found, all were invariant under the field automorphism $\tau:a\mapsto a^{q^2}$ fixing $\GF(q^2)$ elementwise, and it acts on the set of 
lines $\{\ell_a\}$. The orbits of $\langle \tau\rangle$ on $\{\ell_a\}$ are the zero sets of the $\GF(q^2)$-irreducible factors of the polynomial $X^{q+1}+X+1$.
Now every element $r\in R$ can be uniquely represented by the pair $(r^{q^2-1},r^{q^3-1})$. The possible values of $r^{q^2-1}$ are the $q+1$ zeros of $X^{q+1}+X+1$,
and the possible values of $r^{q^3-1}$ are the $q+1$ solutions to $X^{q+1}=1$; let this latter set be denoted by $N$. 
So a  $\langle \tau\rangle$-orbit on $R$ is uniquely determined by a $\GF(q^2)$-irreducible factor $i(X)$ of $X^{q+1}+X+1$ and an element $n\in N$:
$$\{ r\in R: i(r^{q^2-1})=0, r^{q^3-1}=n\}.$$
The hemisystems we construct arise from unions of these orbits. 

Below we list the hemisystems that we found for $3\le q \le 9$. In each table we describe each solution by 
unions of $\langle \tau\rangle$-orbits on $R$. The constituents of these unions are described by
which values of $N$ appear as right-hand values for each $i(X)$.

\begin{example}\hrule\medskip
For $q=3$, $N=\{1,-1,z^2,z^6\}$, where $z$ is the primitive root of $\GF(q^2)$. The $\GF(q^2)$-irreducible factors of $X^{q+1}+X+1$ are
$$i_1(X):X-1\quad\text{ and }\quad i_2(X):X^3+X^2+X-1.$$ Let $\Pi$ be the subset of the ordered pairs $\{i_1,i_2\}\times N$ described by specifying the right-hand coordinates
per possible left-hand coordinate:
\begin{center}
\begin{tabular}{|c|c|}
\hline
$X-1$& $1, z^6$ \\
  $X^3+X^2+X-1$&$-1, z^2$ \\
  \hline
\end{tabular}
\end{center}
Now let
$$\mathcal{H}^R_\Pi:=\{ r\in R \mid i(r^{q^2-1})=0, (i(X), r^{q^3-1})\in\Pi\}.$$
Then our hemisystem of points of $\mathsf{Q}^-(5,q)$ is simply
$$\mathcal{H}_\Pi:=\left\{\langle u\rangle\mid u^{(q^2-q+1)(q-1)}\in \mathcal{H}^R_\Pi \right\}.$$ 
Moreover, we know that $\mathcal{H}_\Pi$ is projectively equivalent to the Segre hemisystem.\medskip\hrule
\end{example}

In each case below, $z$ denotes the primitive element of $\GF(q^2)$.
For each $q$ below, we list one solution, and all the solutions can be obtained by taking the given solution and its orbit under the action of $\langle z\rangle$.

\begin{table}[H]
\begin{center}\small{
\begin{tabular}{c|c|c}
$q$& $i(X)$ & $N$ \\
\hline
$3$  &$X-1$& $1, z^6$ \\
&  $X^3+X^2+X-1$&$-1, z^2$ \\
\hline
$5$ & $X^3+2X^2-X-1$ & $1 , z^8 ,  z^{16}$\\
& $X^3+3X^2-1$ & $1 ,  z^4 ,  z^{20}$\\
\hline
$7$ & $X+3$ & $z^{6} ,   z^{12} ,   z^{30} ,   z^{36} $  \\
& $X+5$ &    $ 1 ,   -1 ,   z^{18} ,   z^{42}$ \\
& $ X^3+4X-1$ & $1,-1,z^{18} ,   z^{42}$ \\
& $X^3-X^2+3X-1$& $ z^{6} ,   z^{12} ,  z^{30} ,   z^{36}$  \\
\hline
$9$ & $X-1$& $ 1, z^{8}, z^{24}, z^{56}, z^{72}$\\
&  $X^3-X^2-X-1$ & $ 1, z^{16}, z^{32}, z^{48}, z^{64}$  \\
& $X^3+z^{50}X^2+z^{50}X-1 $ & $ 1, z^{8}, z^{16}, z^{64}, z^{72}$\\
&  $X^3+z^{70}X^2+z^{70}X-1$ & $ 1, z^{24}, z^{32}, z^{48}, z^{56}$\\
\hline
\end{tabular}}
\caption{Sets $\Pi$ of ordered pairs $(i(X), r^{q^3-1})$.}
\end{center}
\end{table}

We have found hemisystems for larger $q$ and we summarise them below.

\begin{center}

\begin{tabular}{c|c|c}
$q$&$q^2-q+1$&Stabiliser\\
\hline
3&7&$\mathrm{PSL}(3,4). 2$\\
5&21&$3\cdot A_7\cdot 2$\\
7&43&$43: 6$\\
9&73&$73:6$\\
11&111&$111:6$, $333:3$\\
17&273&$273: 3$\\
19&1715&$1715: 6$\\
23&507&$507: 6$\\
27&703&at least $703: 3$\\
\hline
\end{tabular}
\end{center}

\begin{problem}
\label{prob:metacyclic}
Does there exist a hemisystem invariant under a Singer type element for all odd prime powers $q\not\equiv 1\pmod{12}$?
\end{problem}

\subsection{Hemisystems invariant under the stabiliser of a triangle: tyranny of the small?}
\label{sec:triangular}

Another interesting sequence of hemisystems apparent in  our data is that for $q=7,9$ and 11, the \gq{} $\linear{q}$ contains a hemisystem invariant under a group  $K=C_{q+1}^2:S_3$. In fact, for $q=9$ and $11$ there are several such hemisystems. Moreover, the stabiliser of the Segre hemisystem for $q=3$ contains such a subgroup, as does the group $(3\cdot A_7).2$ for $q=5$.  

The group $K$ can be realised as follows. The stabiliser of a nondegenerate hyperplane of $\linear{q}$ contains a group $H\cong C_{q+1}^3:(S_3\times C_{2f})$ where $q=p^f$ that fixes a set $T$ of mutually orthogonal nondegenerate points $\{\langle v_1\rangle,\langle v_2\rangle,\langle v_3\rangle\}$ of the underling projective space. In particular, taking $v_1,v_2,v_3$ as the first three elements of a basis of the underlying vector space, the pointwise stabiliser in $\PGU(4,q)$ of $T$ is the group $D$ of all diagonal matrices $\mathrm{diag}(\lambda_1,\lambda_2,\lambda_3,1)$ such that $\lambda_1^{q+1}=\lambda_2^{q+1}=\lambda_3^{q+1}=1$. Letting $\sigma$ and $\tau$ be the permutation matrices such that $\sigma:v_1\mapsto v_2\mapsto v_3\mapsto v_1$ and $\tau:v_1\mapsto v_1, v_2\mapsto v_3\mapsto v_1$, we have $\langle\sigma,\tau\rangle\cong S_3$. Moreover, $H=D: (\langle \sigma,\tau\rangle \times \langle\phi \rangle)$ where $\phi$ is the field automorphism such that  $\phi:\sum\lambda_iv_i \mapsto \sum\lambda_i^pv_i$. The group $H$ contains a normal subgroup $R$ isomorphic to $C_{q+1}^2$ given by 
$$R:=\{\mathrm{diag}(\lambda_1,\lambda_2,\lambda_3,1)\mid \lambda_i^{q+1}=1,\lambda_1\lambda_2\lambda_3=1\}.$$
The group $K$ that leaves invariant a hemisystem for the values of $q$ examined is $R\rtimes \langle \sigma,\mathrm{diag}(\lambda,\lambda,\lambda,1)\tau\phi^f\rangle$ where $\lambda$ is an element of order $q+1$.

So naturally we may ask if there exists a hemisystem of $\linear{q}$ invariant under $K$ for all $q$? For $q=13$ and $q=17$, we constructed the group $K$ and, as anticipated, found hemisystems stabilised by $K$, but to our surprise the sequence appears to stop there and for $q = 19$, $23$, $25$ and $27$ there are no hemisystems stabilised by $K$. (We were sufficiently surprised by this that we ran the linear program with a second integer programming package --- GLPK --- in addition to Gurobi.)

\subsection{Hemisystems invariant under $2^4.A_5$}
\label{sec:fixedgp}

A further interesting sequence is that for $\mathsf{H}(3,7^2)$ and $\mathsf{H}(3,11^2)$ there is a hemisystem with stabiliser of shape $2^4.A_5$. The stabiliser of the Segre hemisystem for $q=3$ also contains such a subgroup, and further calculations have verified the existence of a hemisystem invariant under $2^4.A_5$ when $q=19$. The group $\PGU(3,q)$ contains a subgroup $H$ isomorphic to $2^4.A_6$ for all $q\equiv 3\pmod 4$ (such a subgroup is usually referred to as a $\mathcal{C}_6$-group, or the normaliser of a symplectic type $r$-group, see for example \cite[\S 4.6]{KL}). The group $H$ contains two groups of shape $2^4.A_5$, corresponding to the two classes of $A_5$ in $A_6$. The group which arises as a stabiliser of a hemisystem for $q=3,7,11$ and $19$ is the one for which the $A_5$ acts transitively on the nontrivial elements of the $2^4$.  

\begin{problem}
Is there a hemisystem of $\linear{q}$ invariant under $2^4.A_5$ for all $q\equiv 3\pmod 4$?
\end{problem}

These hemisystems are especially intriguing (and also potentially harder to search for) as the order of their stabiliser is constant.

\section{BLT-sets}\label{qclans}

In this section, we list some of the known families of BLT-sets.
Suppose we are in the $3$-dimensional symplectic space $\mathsf{W}(3,q)$ 
defined by the form 
$\beta(\bvec{x},\bvec{y}) = x_1y_4-x_4y_1+x_2y_3-x_3y_2$. Then from Payne's \textit{$q$-clans} (see \cite{Payne85}) we can
construct BLT-sets of $\mathsf{W}(3,q)$. 
For the following, we note that a quadratic form $Q$ over $\GF(q)$ is \textit{anisotropic} if
$Q(\bvec{x})=0$ holds only when $\bvec{x}=\bvec{0}$. The following lemma
is straight-forward to prove (see \cite[p. 296]{BakerEbertPenttila}).

\begin{lemma}
\label{lem:qclanrep}
Consider the following lines $\mathcal{L}$ of $\mathsf{W}(3,q)$:
$$\ell_\infty:=
\begin{pmatrix}
0&0&1&0\\
0&0&0&1
\end{pmatrix},
\quad \ell_t:=
\begin{pmatrix}
1&0&f_t&t\\
0&1&g_t&f_t
\end{pmatrix}  \text{ for all }t\in\GF(q).$$
Then $\mathcal{L}$ is a BLT-set of lines of $\mathsf{W}(3,q)$ if and only if 
for all $t,u\in\GF(q)$, $t\ne u$, the following quadratic form on $\GF(q)^2\oplus \GF(q)^2$ is anisotropic:
$$(x,y)\mapsto (t-u)x^2+2(f_t-f_u)xy+(g_t-g_u)y^2.$$
\end{lemma}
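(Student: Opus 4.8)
The plan is to transfer the problem to the Klein quadric, where concurrency of lines becomes perpendicularity of points, and then extract the stated condition by a discriminant computation, using crucially that $\ell_\infty$ is concurrent with no $\ell_t$. First I would pass to Pl\"ucker coordinates: a line $\langle\bvec{u},\bvec{v}\rangle$ of $\PG(3,q)$ corresponds to the point $\bvec{u}\wedge\bvec{v}$ of the Klein quadric in $\PG(5,q)$; two lines are concurrent exactly when the bilinear form of the Klein quadric vanishes on the corresponding pair of points; and the totally isotropic lines of $\mathsf{W}(3,q)$ are precisely the points of a nondegenerate hyperplane section, a $\mathsf{Q}(4,q)$. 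Write $P_\infty$ and $P_t$ for the images of $\ell_\infty$ and $\ell_t$, and $B$ for the bilinear form of this $\mathsf{Q}(4,q)$. A short computation with $2\times 2$ minors then gives $B(P_t,P_u)=\Delta_{t,u}$, where $\Delta_{t,u}:=(f_t-f_u)^2-(t-u)(g_t-g_u)$, and also $B(P_t,P_\infty)=1$. In this language, $\mathcal{L}$ fails to be a BLT-set precisely when there exist three distinct indices $i,j,k\in\GF(q)\cup\{\infty\}$ and a point $P$ of $\mathsf{Q}(4,q)$ with $P\notin\{P_i,P_j,P_k\}$ and $B(P,P_i)=B(P,P_j)=B(P,P_k)=0$.

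Next I would distinguish two cases. If $\Delta_{t,u}=0$ for some distinct $t,u\in\GF(q)$, then $\ell_t$ and $\ell_u$ meet and $\langle P_t,P_u\rangle$ is a totally singular line of $\mathsf{Q}(4,q)$; since $B(P_t,P_\infty)=B(P_u,P_\infty)=1\neq 0$, the point $P:=\langle P_t,P_u\rangle\cap P_\infty^{\perp}$ is a singular point outside $\{P_t,P_u,P_\infty\}$ perpendicular to all three, so $\mathcal{L}$ is not a BLT-set --- consistent with the claim, since the displayed form $(x,y)\mapsto(t-u)x^2+2(f_t-f_u)xy+(g_t-g_u)y^2$ is then degenerate, hence isotropic. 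If instead $\Delta_{t,u}\neq 0$ for all distinct $t,u\in\GF(q)$ then, as $B$ is nonzero on each pair among these $q+1$ points, any three of them are linearly independent, their perp $W:=\{P_i,P_j,P_k\}^{\perp}$ is a line of $\PG(4,q)$ containing none of $P_i,P_j,P_k$, and a point $P$ as above exists if and only if the binary form induced on $W$ is isotropic. From the orthogonal decomposition $W\perp W^{\perp}$ of the $5$-dimensional space underlying $\mathsf{Q}(4,q)$, with $W^{\perp}=\langle P_i,P_j,P_k\rangle$ nondegenerate (its Gram matrix has zero diagonal and off-diagonal entries $\Delta_{ij},\Delta_{ik},\Delta_{jk}$, so determinant $2\Delta_{ij}\Delta_{ik}\Delta_{jk}\neq 0$), the multiplicativity of discriminants, the discriminant of the (projectively unique) quadric $\mathsf{Q}(4,q)$, and the fact that a nondegenerate binary form is anisotropic iff minus its discriminant is a nonsquare combine --- once the square-class arithmetic is carried out --- to yield: the form on $W$ is anisotropic $\iff$ $\Delta_{ij}\Delta_{ik}\Delta_{jk}$ is a nonsquare in $\GF(q)$.

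Finally I would anchor at $\infty$. For a triple $\{t,u,\infty\}$ with $t,u\in\GF(q)$ we have $\Delta_{t,u}\Delta_{t,\infty}\Delta_{u,\infty}=\Delta_{t,u}$, so the criterion of the previous paragraph reads ``$\Delta_{t,u}$ is a nonsquare for all distinct $t,u\in\GF(q)$''; and since the discriminant of $(x,y)\mapsto(t-u)x^2+2(f_t-f_u)xy+(g_t-g_u)y^2$ is $4\Delta_{t,u}$, this is exactly the asserted anisotropy. For a triple $\{t,u,v\}\subseteq\GF(q)$ the product $\Delta_{tu}\Delta_{tv}\Delta_{uv}$ is automatically a nonsquare once each factor is, so such triples contribute nothing further; together with the first case this proves the equivalence. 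I expect the only real difficulty to be the bookkeeping in the second paragraph: choosing the correct hyperplane section, verifying the identities $B(P_t,P_u)=\Delta_{t,u}$ and $B(P_t,P_\infty)=1$, pinning down the discriminant of $\mathsf{Q}(4,q)$, and keeping the square classes straight --- in particular the power-of-two factor distinguishing a discriminant from a Gram determinant in odd dimension. Once that is settled, it is precisely the fact that $\ell_\infty$ meets no $\ell_t$ that makes the triangle criterion collapse to the pairwise statement of the lemma.
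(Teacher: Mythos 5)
Your proof is correct, but note that the paper does not actually supply a proof of this lemma: it declares the statement straightforward and cites Baker--Ebert--Penttila, where the argument is the direct one. That direct argument observes that a common point of $\ell_t$ and $\ell_u$ amounts to a nonzero solution of $x(t-u)+y(f_t-f_u)=0$, $x(f_t-f_u)+y(g_t-g_u)=0$, so that concurrency is governed by the vanishing of $\Delta_{t,u}:=(f_t-f_u)^2-(t-u)(g_t-g_u)$ (the determinant of the Gram matrix of the displayed binary form), and then analyses when a totally isotropic line can meet three members of $\mathcal{L}$; this is the classical $q$-clan/BLT-set equivalence. Your route through the Klein correspondence reaches the same quantity $\Delta_{t,u}$ as the value $B(P_t,P_u)$ of the polarized form on $\mathsf{Q}(4,q)$, and I checked that the bookkeeping you defer does close up: with the paper's form $\beta$ the relevant hyperplane is $p_{14}+p_{23}=0$, one indeed gets $B(P_t,P_u)=\Delta_{t,u}$ and $B(P_t,P_\infty)=1$, the polarized form on the ambient $5$-space has Gram determinant $-2$ modulo squares, $\det\bigl(B|_{\langle P_i,P_j,P_k\rangle}\bigr)=2\Delta_{ij}\Delta_{ik}\Delta_{jk}$, and a nondegenerate binary form with polarized Gram determinant $d$ is anisotropic iff $-d$ is a nonsquare, which yields exactly your criterion that $\Delta_{ij}\Delta_{ik}\Delta_{jk}$ be a nonsquare; anchoring at $\infty$ and using that a product of three nonsquares is a nonsquare then collapses the triangle criterion to the pairwise one, as you say. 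What your approach buys over the direct computation is a conceptual explanation of why only pairs, and not triples, need to be tested. One phrase to repair when writing it up: in odd dimension the discriminant is not an invariant of the \emph{projective} quadric (rescaling the form by a nonsquare changes its square class), so you must compute it for the specific form induced by the Klein correspondence rather than appeal to the projective uniqueness of $\mathsf{Q}(4,q)$ --- but this is exactly the bookkeeping you already flagged, and it works.
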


We now summarise the maps $f$ and $g$ that generate the flock quadrangles used in this paper. Our information has been taken from \cite{MaskaThesis}. Four of the families are outlined in Table \ref{tab:qclans}.

\begin{table}[H]
\begin{center}
\begin{tabular}{p{3.2cm}|c|c|c|p{7cm}}
Flock quadrangle& Abbreviation & $f_t$&$g_t$ & Conditions\\
\hline
Linear & $\mathsf{H}(3,q^2)$ & 
$0$&$-nt$
&$n$ is a nonsquare in $\GF(q)$\\
Fisher-Thas-Walker-Kantor-Betten&$\ftwkb{q}$&
$\tfrac{3}{2}t^2$&$3t^3$
&$q\equiv 2\pmod{3}$\\
Kantor Monomial&$\mathsf{K}_2(q)$&
$\tfrac{5}{2}t^3$&$5t^5$
&
$q\equiv \pm 2\pmod{5}$, $5$ is a nonsquare in $\GF(q)$\\
Kantor-Knuth&$\mathsf{K}_1(q)$&
$0$&$-nt^\sigma$&
$n\in\GF(q)$ nonsquare, $q$ not prime, $1\ne \sigma\in\mathsf{Aut}(\GF(q))$\\
\hline
\end{tabular}
\end{center}
\caption{Functions $f$ and $g$ for some flock quadrangles. For each map, the variable $t$ runs over $\GF(q)$.}
\label{tab:qclans}
\end{table}

For the remaining flock quadrangles considered in this paper, the representation of the BLT-set as in Lemma \ref{lem:qclanrep} is more difficult to write down, so we resort to
a different model due to Penttila. Consider the dual \gq{} of $\mathsf{W}(3,q)$, the points and lines of the parabolic quadric
$\mathsf{Q}(4,q)$. So a BLT-set of lines of $\mathsf{W}(3,q)$ corresponds to a \textit{BLT-set of points} of $\mathsf{Q}(4,q)$. Consider $V:=\GF(q^2)\oplus \GF(q^2)\oplus\GF(q)$ 
as a vector space over $\GF(q)$, and define the following quadratic form on $V$:
$$(x,y,a)\mapsto x^{q+1}+y^{q+1}+a^2.$$
This quadratic form defines a parabolic quadric $\mathcal{Q}$ isomorphic to $\mathsf{Q}(4,q)$. 

The following models were taken from \cite{MaskaThesis}.

\subsubsection*{The Fisher BLT-sets:}

Fix an element $\beta\in\GF(q^2)$ with $\beta^{q+1}=-1$. Let 
$$\mathcal{P}=\{ (\beta x^2,0,1) \mid x^{q+1}=1\} \cup \{(0,\beta y^2,1)\mid y^{q+1}=1\}.$$
Then $\mathcal{P}$ defines a BLT-set of points of $\mathcal{Q}$.

\subsubsection*{The Penttila-Mondello BLT-sets:}

Suppose $q\equiv \pm 1\pmod{10}$ and fix $\beta,\gamma\in\GF(q^2)$ satisfying
$\beta^{q+1}=-\tfrac{4}{5}$ and $\gamma^{q+1}=-\tfrac{1}{5}$. Let 
$$\mathcal{P}=\{(\beta x^2,\gamma x^3,1)\mid x^{q+1}=1\}.$$
Then $\mathcal{P}$ is a BLT-set of points of $\mathcal{Q}$.
For $\pentmon{11}$, we may use a representation as in Lemma \ref{lem:qclanrep} given by the functions $f_t$ and $g_t$ in Table \ref{tab:PM11}.

\begin{table}[ht]
\begin{center}
\begin{tabular}{c|lllllllllll}
 $t$ & 0&1&2&3&4&5&6&7&8&9&10\\
\hline
$f_t$& 8 &0&7&4&8&0&1&5&0&0&0 \\
$g_t$ &1&8&3&2&5&6&10&9&4&7&0
\end{tabular}
\end{center}
\caption{The functions $f_t$ and $g_t$ for $\pentmon{11}$}
\label{tab:PM11}
\end{table}


\section{Computational methods}\label{section:methods}

The {\em point-line incidence matrix} of a \gq{} is the matrix $A$ with rows indexed by points and columns by lines such that
$$
A_{P,\ell} =
\begin{cases}
1, & P \text{ is on } \ell;\\ 0, & \text{otherwise}.
\end{cases}
$$
 In order to construct the point-line incidence matrix of a flock \gq, we used the GAP package \textsf{FinInG}\footnote{This can be found at \texttt{http://cage.ugent.be/geometry/fining.php}. This package is currently in development.}. This software can construct flock \gqs{} from the information given in 
 Section \ref{qclans}.
 
A hemisystem is a subset of the columns of $A$ that sum to $(s+1)/2\; \allones^T$ where $\allones$ is the all-ones (row) vector or, equivalently, a $\{0,1\}$-vector $\bvec{h}$ such that 
\begin{equation}
A\bvec{h}^T = (s+1)/2\; \allones^T.
\end{equation}

For all but the smallest \gqs, the matrix $A$ is so large that we cannot hope
to solve the equations completely. To reduce the problem, we assume the existence of some group $G$ stabilizing the hemisystem. Suppose that $G$ has orbits
$
\{ \mathcal{P}_1, \mathcal{P}_2, \ldots, \mathcal{P}_m \}
$
on points and 
$
\{ \mathcal{L}_1, \mathcal{L}_2, \ldots, \mathcal{L}_n \}
$
on lines. Then every point in a point-orbit $\mathcal{P}_i$ is incident with the same number of lines in the line-orbit $\mathcal{L}_j$. If we denote this number by $b_{ij}$ and define the $m \times n$ matrix $B = (b_{ij})$, then a $\{0,1\}$-vector $\bvec{h}$ such that 
\begin{equation}\label{tactical}
B\bvec{h}^T =  (s+1)/2\; \allones^T
\end{equation}
determines a hemisystem that is stabilised by the group $G$. 

There are a variety of approaches to solving equations such as \eqref{tactical}. In particular, the system of equations can be viewed either as an {\em integer linear program} or as a {\em constraint satisfaction problem}. After experimenting with software for each type of problem, we determined that the commercial integer programming package Gurobi \cite{gurobi} (available with a free academic license) was the most effective for our purposes.

A linear program attempts to find values for variables $x_1, x_2, \ldots, x_n$ that maximise (or minimise) a linear objective function subject to linear constraints. An {\em integer} linear program, or just integer program, is a linear program with the additional restriction that the variables must take integral values. Solving \eqref{tactical} does not involve any maximizing or minimizing and so the objective function can be taken to be a constant, say 0, and then any feasible solution $\bvec{x} = (x_1, x_2, \ldots, x_n)$ to the following integer program yields a hemisystem:
\[
	\begin{array}{lrcl}
	\textrm{Maximise:}   & 0 &      &   \\
	\textrm{subject to:} & B\bvec{x}^T    & =    & (s+1)/2\; \allones^T\\
			    & x_i     & \in & \{0, 1\}.
	\end{array}
\]

In order to find {\em all} the solutions to a given system of equations, the system is augmented as each solution is found with an additional constraint excluding that particular solution, and the system is then re-solved.  When all the solutions have been found and excluded, the resulting system has no integer feasible solutions. In order to exclude a particular solution $\bvec{h} = (h_1, h_2, \ldots, h_n)$ it suffices to add a constraint of the form
\[
\sum_{\{i \mid h_i = 1\}} x_i < \sum_i {h_i}
\]
which merely says that $\bvec{x}$ cannot agree with $\bvec{h}$ in every coordinate position, and so must differ in at least one place. In principle, a constraint of this form only eliminates vectors {\em identical} to $\bvec{h}$ and still permits the solver to investigate vectors that have almost all of their entries identical to $\bvec{h}$.  However, if we know an upper bound, say $\alpha$, on the size of the intersection of two hemisystems, then we can strengthen this constraint to 

\begin{equation}\label{constraint}
\sum_{\{i \mid h_i = 1\}} x_i \le \alpha
\end{equation}
without missing any hemisystems. The exhaustive search for hemisystems in $\linear{5}$ was made feasible by using two basic techniques to shorten the search time:
\begin{itemize}
\item Use the automorphism group of $\linear{5}$ to determine the largest possible
set of lines that can freely be assumed to be in a hemisystem.
\item Use knowledge of the possible intersection sizes of a hemisystem with the
two known hemisystems to add strong constraints of the same type as \eqref{constraint} during the search.
\end{itemize}

A more detailed description of the computation for $\linear{5}$ follows:

\smallskip

\textsc{Proof of Theorem \ref{thm:q=5} for $\linear{5}$.}

\smallskip

Let $G$ be the full automorphism group of $\linear{5}$ and let $\mathcal{H}$ be a hemisystem. As $G$ is transitive on the set of lines of $\linear{5}$ we can assume without loss of generality that $\ell_1\in\mathcal{H}$. Then the stabiliser $G_{\ell_1}$  has two orbits on the remaining lines, those disjoint from $\ell_1$ and those that meet $\ell_1$. It is easy to see that any hemisystem containing $\ell_1$ must contain a line disjoint from $\ell_1$ and so we can arbitrarily pick a second line, say $\ell_2$ , and assume without loss of generality that $\ell_1, \ell_2 \in {\mathcal H}$. This process can be continued in a semi-automated fashion as follows: suppose that we have a set $\ell_1,\ldots,\ell_i$ of lines that we can already assume are contained in $\mathcal{H}$, and consider the orbits of the setwise stabiliser $G_{\{\ell_1,\ldots,\ell_i\}}$ on lines. An orbit ${\mathcal O}$ is denoted {\em essential} if a search for a hemisystem that contains $\ell_1,\ldots,\ell_i$ but  does not contain {\em any} line from ${\mathcal O}$ is infeasible. If ${\mathcal O}$ is essential, then ${\mathcal H}$ contains at least one line from ${\mathcal O}$, and we can select $\ell_{i+1}$ arbitrarily from ${\mathcal O}$. This process can be continued until the set of lines is sufficiently large that its stabiliser is so small that it has no essential orbits. In this fashion, we found a particular set of 8 lines $\ell_1, \ldots, \ell_8$  that can be assumed to lie in $\mathcal{H}$.

The next important step was to determine that no hemisystem has a ``large'' intersection with either of the two known hemisystems. Let ${\mathcal H}_1$ and ${\mathcal H}_2$
be representatives of the two known hemisystems. First we found the maximum possible size in which {\em any} hemisystem (known or unknown) can intersect ${\mathcal H}_1$ by running the integer linear program where the objective function to be maximised is the sum of the variables corresponding to the lines in ${\mathcal H}_1$. This revealed that a hemisystem different from ${\mathcal H}_1$ can intersect ${\mathcal H}_1$ in at most 306 lines. By running the linear program again with the additional constraint that the intersection with ${\mathcal H}_1$ has size {\em exactly} 306, we determined all the hemisystems that intersect ${\mathcal H}_1$ in 306 lines and confirmed that no new hemisystems arose. We repeated this process with the ``next largest'' intersection, which proved to be size 300, then 282, then 270 and then 258, eventually confirming that any hemisystem that meets ${\mathcal H}_1$ in 258 or more lines is isomorphic to either ${\mathcal H}_1$ or ${\mathcal H}_2$.  Similar results were obtained for ${\mathcal H}_2$ and similarly we determined that any hemisystem meeting ${\mathcal H}_2$ in 258 or more lines is isomorphic to ${\mathcal H}_1$ or ${\mathcal H}_2$.

Finally, the exhaustive search is run where the variables corresponding to $\ell_1, \ldots, \ell_8$ are initially set to 1 and every time a hemisystem is found, it is excluded by adding a constraint similar to \eqref{constraint} with $\alpha = 257$. Notice that this constraint is much stronger than simply excluding the hemisystem that has just been found and will exclude other hemisystems. However if the just-found hemisystem is one of the two known ones, then the ``extra'' hemisystems that are excluded by the constraint are necessarily isomorphic to the known ones, and hence not of interest. Therefore if unknown hemisystems do exist, then at least one of them will be discovered by the search.  As this does not occur, we conclude that there are no other hemisystems of $\linear{5}$. \qed

In this computation, there is a trade-off involved in choosing the value 258 used in the constraints to exclude solutions as they are found. Using a lower value would make the final exhaustive part of the search run faster, but it would take longer to establish that only known hemisystems intersect ${\mathcal H}_1$ or ${\mathcal H}_2$ in that many lines. 

The computation for $\ftwkb{5}$ was done in an exactly analogous fashion.

\section{A summary of the known hemisystems of flock quadrangles}\label{known}

In this section we catalogue all the known hemisystems of lines of flock quadrangles of order $(s^2,s)$ for $s\le 11$. These include
those which arise in the pre-existing literature, those obtained via Theorem \ref{construction}, and numerous further examples constructed by computer.  Each row of the table describes a complementary pair of hemisystems; the column SC (for ``self-complementary'') indicates whether the hemisystem is equivalent to its complement in which case it contributes just 1 to the total count of hemisystems.

The tables contain an exhaustive listing of all the hemisystems that arise by Theorem \ref{construction} and are complete for the known \gqs{} of order up to $(5^2,5)$. However there may be many more hemisystems, though necessarily with small automorphism groups, that remain to be found.

\begin{proposition}
Let $\mathcal{H}$ be a hemisystem of a flock quadrangle of order $(s^2,s)$ with $s\le 9$ such that $\mathcal{H}$ arises from Theorem \ref{construction}. Then $\mathcal{H}$ appears in one of the tables in this section.
\end{proposition}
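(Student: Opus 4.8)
The proof is a finite verification, and the plan is to enumerate every possible output of Theorem~\ref{construction} for each relevant flock \gq{} and match it against the tables. The construction depends on three pieces of data: a flock \gq{} $\mathcal{K}(\mathcal{O})$ of order $(q^2,q)$ (equivalently, a BLT-set $\mathcal{O}$ of totally isotropic planes of $\mathsf{W}(5,q)$ through $P$); a line $\ell$ of $\mathsf{W}(5,q)$ disjoint from every member of $\mathcal{O}$; and a subset $\mathcal{S}$ of size $(q-1)/2$ of the $q$ totally isotropic planes on $\ell$ other than $\langle P,\ell\rangle$. For $s\le 9$ there are only finitely many flock \gqs{} to consider, namely $\linear3$; $\linear5$ and $\ftwkb5$; $\linear7$ and $\kmonom7$; and $\linear9$, $\kknuth9$ and $\fish9$; that this list is complete follows from the BLT-set classification of Law and Penttila \cite{LawPenttila03} together with \cite{PayneRogers90}. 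Each of these \gqs{} can be built explicitly from the $q$-clan or BLT-set data of Section~\ref{qclans} using \textsf{FinInG}, and for each the full automorphism group $A=\Aut(\mathcal{K}(\mathcal{O}))$ is known: it is $E\rtimes\Gamma\Sp(4,q)_{\mathcal{O}}$ in the non-classical cases by \cite{paynethasflockauto}, and $\PGammaU(4,q)$ in the classical case. It therefore suffices to enumerate, for each such \gq{}, the pairs $(\ell,\mathcal{S})$ up to the action of $A$.

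First I would reduce the choice of $\ell$. The proof of Theorem~\ref{thm:stab} shows that $E$ acts transitively on the lines of $\langle P,\ell\rangle$ not through $P$, so, up to the automorphisms already present, the datum $\ell$ amounts to the datum of a totally isotropic line $\ell'$ of $P^\perp/P\cong\mathsf{W}(3,q)$ disjoint from the BLT-set $\{\pi/P:\pi\in\mathcal{O}\}$. Hence I would compute the orbits of $A$ on the external lines of the BLT-set (a routine finite orbit computation in \textsf{GAP}); for each orbit representative $\ell'$ one recovers the bipartition $\mathcal{O}=\mathcal{O}^+\cup\mathcal{O}^-$ from the equivalence relation $\equiv_\ell$ of Lemma~\ref{eqrelation}, and computes the stabiliser of $\ell$ in $A$ together with its action on the $q$ planes $\langle\ell,[x_1,0,0,0,0,1]\rangle$, $x_1\in\GF(q)$. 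Finally I would run over all $\binom{q}{(q-1)/2}$ subsets $\mathcal{S}$ of those planes --- at most $\binom{9}{4}=126$ in the largest case, so the enumeration is trivially finite --- and retain one representative of each orbit.

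For each surviving pair $(\ell',\mathcal{S})$ I would build the line set $\mathcal{H}=\mathcal{O}^+\cup\mathcal{L}^+_{\mathcal{S}}\cup\mathcal{L}^-_{\mathcal{S}^c}$ explicitly inside $\mathcal{K}(\mathcal{O})$, check directly that it is a hemisystem (a sanity check on the implementation), and then compute invariants --- the order of its setwise stabiliser in $A$, refined if necessary by a canonical form for its point-line incidence structure --- and compare it against the rows of the tables of Section~\ref{known}. Since the complement of $\mathcal{H}$ is again of the form produced by Theorem~\ref{construction} (interchange $\mathcal{O}^+$ and $\mathcal{O}^-$ and replace $\mathcal{S}$ by the appropriate set of planes on $\ell$), the self-complementary versus complementary-pair bookkeeping is read off automatically. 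Running this for all choices of \gq{} and all orbit representatives shows that every hemisystem obtained is equivalent to one already listed, which is the assertion of the proposition.

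The main obstacle is bookkeeping rather than anything conceptual: one must be sure that the group used to reduce the data is the \emph{full} automorphism group of the \gq{} and not merely the group visible in the Knarr model. For the classical \gqs{} $\linear3,\linear5,\linear7,\linear9$ this requires transporting the extra automorphisms of the Hermitian surface through the Knarr-to-unitary dictionary, so that two superficially different choices of $\mathcal{S}$ that yield projectively equivalent hemisystems are recognised as equivalent; it is the canonical-form step that ultimately guarantees this, and it is also what guarantees that the representative BLT-set chosen for each isomorphism type really does reproduce the \gq{} named in the tables.
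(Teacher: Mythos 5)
Your proposal is correct and is essentially the paper's own (implicit) argument: the proposition rests on an exhaustive computer enumeration of the construction data $(\ell,\mathcal{S})$ up to the automorphism group of each of the finitely many flock \gqs{} of order $(s^2,s)$ with $s\le 9$, exactly as you describe, with the choice of $\ell$ reduced via the transitivity of $E$ on the lines of $\langle P,\ell\rangle$ not on $P$ and the resulting hemisystems identified against the tabulated ones by isomorphism testing. Your caveat about using the full automorphism group rather than only the Knarr-model group in the classical cases is the same point the authors must (and do) handle.
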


We also list all hemisystems arising from Theorem \ref{construction} for $\linear{11}$ in Table \ref{tab:H311}. Due to the large number of hemisystems of Type I for the remaining flock quadrangles of order $(11^2,11)$, they are listed in the Appendix, which is only included in the version of this paper on the \textsc{arxiv}.

\begin{quote}
\begin{framed}
\begin{center}
\emph{Reconstruction of the hemisystems from the data}
\end{center}
The data given for the Type I hemisystems in our tables is sufficient to reconstruct the actual hemisystem given some additional knowledge about the particular choices that have been made for the variables in the construction. First of all, the finite fields in \textsf{GAP} have a determined primitive element and the ordering of the elements of the field is first graded by towers of subfields, and then by exponents of the primitive element. Matrices in \textsf{GAP} are ordered lexicographically, row by row.
The point $P$ is $(1,0,0,0,0,0)$ and the BLT-sets are the ones given in Section \ref{qclans}.  Each totally isotropic plane can be represented uniquely by a $3\times 6$ matrix written in Hermite normal form, whose row space gives us the corresponding 3-dimensional vector subspace. The totally isotropic planes on $\ell$ are sorted by sorting the corresponding $3\times 6$ matrices into lexicographic order, and indexed by $\{1,\ldots,q+1\}$. The chosen subset ${\mathcal S}$ is given by a $(q-1)/2$ subset of this index set.
\end{framed}
\end{quote}

\subsection{Linear, $\linear{3}$}

Segre \cite{Segre65} established that there is just one example of a
hemisystem (up to projectivity) in $\linear{3}$. The strongly regular graph (and partial quadrangle) arising is the Gewirtz graph on $56$ vertices.

\begin{table}[H]
\begin{center}
\begin{tabular}{l|l|c|c|c|l}
Group & \multicolumn{1}{|c|}{Size}& SC &  Construction/Author(s) & $\ell$ & Subset ${\mathcal S}$\\   
\hline   	
$\mathsf{PSL}(3,4).2$ & 40320 & true & Theorem \ref{construction}, Segre \cite{Segre65},&
$\left[\begin{smallmatrix}
0&1&0&1&0&0\\  0&0& 1&1 &1& 0\\
 \end{smallmatrix}\right]$ 
& any  \\ 
&&&Sections \ref{sec:metacyclic}, \ref{sec:triangular} and \ref{sec:fixedgp}&& \\ \hline
\end{tabular}
\end{center}
\caption{The hemisystem of $\linear{3}$.}
\label{tab:H39}
\end{table}

\subsection{Linear, $\linear{5}$}

The full automorphism group of this \gq{} is $\PGammaU(4,5)$ which has order
$2^9 \times 3^4 \times 5^6 \times 7 \times 13$. There were two previously known
hemisystems in this \gq{} and our computer searches have confirmed that there are no more.

\begin{table}[H]
\begin{center}
\begin{tabular}{l|r|c|c|c|l}
Group & \multicolumn{1}{|c|}{Size}& SC &  Construction/Author(s) & $\ell$ & Subset ${\mathcal S}$\\   
\hline
$\mathsf{P\Sigma L}(2,25)$&15600& true &  Theorem \ref{construction}, Cossidente--Penttila \cite{CossidentePenttila05}&
$\left[\begin{smallmatrix}
0&1&0&1&0&0\\  0&0& 1&0 &1& 0\\
 \end{smallmatrix}\right]$ &
any \\
\hline
$(3\cdot A_7).2$&15120& true &  Cossidente--Penttila \cite{CossidentePenttila05}, Sections \ref{sec:metacyclic} and \ref{sec:triangular}\\
\cline{1-4}
\end{tabular}
\end{center}
\caption{The hemisystems of $\linear{5}$.}
\label{tab:H35}
\end{table}

\subsection{Fisher-Thas/Walker/Kantor/Betten, $\ftwkb{5}$}\label{FTWKB5}

The full automorphism group of this \gq{} is $5^{1+4}:(\mathsf{SL}(2,9):C_4)$, which has order $2^6\times 3^2\times 5^6$.  There was one previously known hemisystem of this \gq{} in the literature and Theorem \ref{construction} yields a second example. Our computer searches uncovered a third example with group $S_3$, and confirmed that there are no more.

\begin{table}[H]
\begin{center}
\begin{tabular}{l|r|c|c|c|l}
Group & \multicolumn{1}{|c|}{Size}& SC &  Construction/Author(s) & $\ell$ & Subset ${\mathcal S}$\\   
\hline
$C_5^2:(C_4\times S_3)$& 600 &false&Theorem \ref{construction}&
$\left[\begin{smallmatrix}
0&1&0&0&1&0\\  0&0& 1&1 &0 & 0\\
 \end{smallmatrix}\right]$ &
any\\
\hline
$\AGL(1,5)\times S_3$&120& false& Bamberg--De Clerck--Durante \cite{BambergDeClerckDurante09}\\
$S_3$& 6& false& \textbf{New}\\
\cline{1-4}
\end{tabular}
\end{center}
\caption{The hemisystems of $\ftwkb{5}$.}
\label{tab:ftwkb5}
\end{table}

\subsection{Linear, $\linear{7}$}

The full automorphism group of this \gq{} is $\PGammaU(4,7)$, which has order $2^{13}\times 3^2\times 5^2\times 7^6\times 43$. There were five previously known hemisystems in this quadrangle and our computer searches have uncovered a sixth.

\begin{table}[H]
\begin{center}
\begin{tabular}{l|r|c|c|c|l}
Group & \multicolumn{1}{|c|}{Size}& SC &  Construction/Author(s) & $\ell$ & Subset ${\mathcal S}$\\   
\hline
$\mathsf{P\Sigma L}(2,49)$&117600 &true&Theorem \ref{construction}, Cossidente--Penttila \cite{CossidentePenttila05}&
$\left[\begin{smallmatrix}
0&1&0&1&0&0\\  0&0& 1&1 &1& 0\\
 \end{smallmatrix}\right]$& 
 $\{1,3,4\} $ \\
$C_2 \times (C_7^2 : Q_{16})$&1568&true&Penttila (personal communication), &
& 
 $\{1,3,5\} $\\
&&&Theorem \ref{construction}&&\\
\hline
$2^4.A_5$&960&true&Bamberg--Kelly--Law--Penttila  \cite{BKLP07}\\
&&&Section \ref{sec:fixedgp}\\
$C_2\times (C_{43}:C_{6})$&516&true&Bamberg--Kelly--Law--Penttila  \cite{BKLP07}\\
$C_8^2:S_3$ & 384&true& \textbf{New}, Section \ref{sec:triangular}\\
$C_2\times\mathsf{PSL}(2,7)$&336&true&Cossidente--Penttila \cite{CossidentePenttila09}, Section \ref{sec:metacyclic}  \\
\cline{1-4}
\end{tabular}
\end{center}
\caption{Known hemisystems of $\linear{7}$.}
\end{table}
  
\subsection{Kantor Monomial, $\kmonom{7}$}

The full automorphism group of this \gq{} is 
$7^{1+4}:(C_3 \times (Q_8:(\mathsf{SL}(2,3).2):2))$, which has order $2^8\times 3^2\times 7^5$.  In addition to the 14 examples obtained by Theorem \ref{construction}, we have found a further 15 hemisystems; all are listed in Table~\ref{tab:km7}.

\begin{table}[H]
\begin{center}
\begin{tabular}{l|r|c|c|c|l}
Group & \multicolumn{1}{|c|}{Size}& SC &  Construction/Author(s) & $\ell$ & Subset ${\mathcal S}$\\   
\hline
$C_7^2:(C_3\times \mathsf{SL}(2,3))$&3528&false&Theorem \ref{construction}&
$\left[\begin{smallmatrix}
0&1&0&1&0&0\\  0& 0&1 &0& 1&0\\
 \end{smallmatrix}\right]$&  
 $\{1,3,4\} $\\
$C_7^2:(\mathsf{SL}(2,3).2)$&2352&false&Theorem \ref{construction}&
 &  
 $\{1,3,5\} $\\
 \hline
$C_7^2:(Q_{16}\times C_3)$&2352&false&Theorem \ref{construction}&
$\left[\begin{smallmatrix}
0&1&0&0&1&0\\  0&0& 1&4 &0& 0\\
 \end{smallmatrix}\right]$ &  
 $\{1,3,4\} $\\
$(C_7^2:Q_{16})\times C_2$&1568&false&Theorem \ref{construction}&
 &  
 $\{1,3,5\} $ \\
  \hline
$C_7^2:(C_6\times C_3)$&882&true&Theorem \ref{construction}&
$\left[\begin{smallmatrix}
0&1&0&0&1&0\\ 0&0&1&3&0&0\\
 \end{smallmatrix}\right]$  &  
 $\{1, 3, 4\} $\\
$C_7^2:(C_3:C_4)$&588&true&Theorem \ref{construction}&
 &  
 $\{1, 3, 5\} $ \\
  \hline
$C_7^2:C_{12}$&588&false&Theorem \ref{construction}&
$\left[\begin{smallmatrix}
0&1&0&0&1&0\\ 0&0&1&1&0&0\\
 \end{smallmatrix}\right]$  &  
 $\{1,3,4\} $\\
$C_7^2:Q_8$&392&false&Theorem \ref{construction}&
 &  
 $\{1, 3, 5\} $\\
\hline
$C_3\times F_{42}$&126&false&\textbf{New}\\
$C_3\times F_{42}$&126&true&\textbf{New}\\
$C_2\times (C_7:C_3)$&42&false&\textbf{New}\\
$\AGL(1,7)$&42&false&\textbf{New}\\
$(C_2\times Q_8):C_2$&32&false&\textbf{New}\\
$(C_2\times Q_8):C_2$&32&false&\textbf{New}\\
$C_7:C_3$&21&false&\textbf{New}\\
$C_7:C_3$&21&true&\textbf{New}\\
$C_3$ &3 &true&\textbf{New}\\
\cline{1-4}
\end{tabular}
\end{center}
\caption{Known hemisystems of $\kmonom{7}$.}
\label{tab:km7}
\end{table}

%
%

\subsection{Linear, $\mathsf{H}(3,9^2)$}

The full automorphism group of this \gq{} is $\PGammaU(4,9)$, which has order $2^{12}\times3^{12}\times 5^3\times41\times73$. In addition to the two previously known hemisystems, we found two more arising from Theorem~\ref{construction} and three others; all are listed in Table~\ref{tab:lin9}.

\begin{table}[H]
\begin{center}
\begin{tabular}{l|r|c|c|c|l}
Group & \multicolumn{1}{|c|}{Size}& SC &  Construction/Author(s) & $\ell$ & Subset ${\mathcal S}$\\   
\hline
$\mathsf{P\Sigma L}(2,81)$& 1062720 &true&Theorem \ref{construction}, Cossidente--Penttila \cite{CossidentePenttila05}&
$\left[\begin{smallmatrix}
0&1&0&1&0&0\\  0&0& 1&0 &1& 0\\
 \end{smallmatrix}\right]$  &  
 $\{1,3,4,5\}$\\
$C_3^4:(C_{20}:C_4)$  & 6480 & true &Theorem \ref{construction}&
 &  
 $\{1,3,5,6\}$\\
$C_3^4:(C_{5}:C_8)$  & 3240& true & Theorem \ref{construction}&
 &  
 $\{1,3,5,9\}$ \\
\hline
 $C_{73}:C_{12}$ & 876 & true &Bamberg--Kelly--Law--Penttila  \cite{BKLP07},\\
&&& Section \ref{sec:metacyclic} \\
$(C_{10}^2 : C_4) : C_3$ &1200& true &\textbf{New}, Section \ref{sec:triangular}\\
$C_{10}^2:S_3$ &600& true &\textbf{New}, Section \ref{sec:triangular}\\
   $(C_5 \times (C_5 : C_4)) : C_4$  &400&true &\textbf{New}\\
\cline{1-4}
\end{tabular}
\end{center}
\caption{Known hemisystems of $\linear{9}$.}
\label{tab:lin9}
\end{table}

\subsection{Kantor-Knuth, $\mathsf{K}_1(9)$}

The full automorphism group of this \gq{} is $E_9 : (((\SL(2,9).C_4) : C_8) : C_2)$ where $E_9$ is the Heisenberg group of order $9^5$ with centre of order $9$. The order of the automorphism group is $2^{10}\times 3^{12}\times5$.

\begin{table}[H]
\begin{center}
\begin{tabular}{l|r|c|c|c|l}
Group & \multicolumn{1}{|c|}{Size}& SC &  Construction/Author(s) & $\ell$ & Subset ${\mathcal S}$\\   
\hline
$C_3^4:(C_4\times C_{8})$&2592&true&Theorem \ref{construction}&
$\left[\begin{smallmatrix}
0&1&0&1&0&0\\ 0&0&1&0&1&0\\
 \end{smallmatrix}\right]$   &  
 $\{1,3,5,9\}$ \\
$C_3^4:(C_2\times C_{8})$&1296&true&Theorem \ref{construction}&
 &  
 $\{1,3,5,6\}$ \\
$C_3^4:C_{8}$&648&true&Theorem \ref{construction}&
 &  
 $\{1,3,4,5\}$ \\
\hline
$C_4 \times \AGL(1,9)$   &288&true &\textbf{New}\\
$\AGL(1,9)$ & 72&true &\textbf{New}\\
\cline{1-4}
\end{tabular}
\end{center}
\caption{Known hemisystems of $\kknuth{9}$.}
\end{table}

\subsection{Fisher, $\fish{9}$}

 The full automorphism group of this \gq{} is 
$E_9 :(C_5^2: (D_{16}.Q_8))$ which has order $2^{7} \times 3^{10} \times 5^{2}$. (Here
$E_9$ is the Heisenberg group of order $9^5$ with centre of order $9$.)

\begin{center}
\begin{table}[H]
\begin{tabular}{l|r|c|c|c|l}
Group & \multicolumn{1}{|c|}{Size}& SC &  Construction/Author(s) & $\ell$ & Subset ${\mathcal S}$\\   
\hline
$C_3^4 : C_5 : C_8$ & 3240 & false  &Theorem \ref{construction} & $\left[\begin{smallmatrix} 0 & 1 & 0 & 0 & 0 & 0 &  \\ 0 & 0 & 1 & 0 & 0 & 0 &  \\  \end{smallmatrix}\right]$ &\{1, 3, 4, 5\}\\ 
$C_3^4 : C_{20} : C_4$ & 6480 & false  &Theorem \ref{construction} &  &\{1, 3, 5, 6\}\\ 
$C_3^4 : C_5 : (C_4 . (C_4 \times C_2))$ & 
12960 & false  &Theorem \ref{construction} &  &\{1, 3, 5, 9\}\\ 
\hline
$C_3^4 : C_2$ & 162 & false  &Theorem \ref{construction} & $\left[\begin{smallmatrix} 0 & 1 & 0 & 0 & 0 & 0 &  \\ 0 & 0 & 1 & z & 0 & 0 &  \\  \end{smallmatrix}\right]$ &\{1, 3, 4, 5\}\\ 
$C_3^4 : C_4$ & 324 & false  &Theorem \ref{construction} &  &\{1, 3, 5, 6\}\\ 
$C_3^4 : C_8$ & 648 & false  &Theorem \ref{construction} &  &\{1, 3, 5, 9\}\\ 
\hline
$C_3^4 : C_4$ & 324 & true  &Theorem \ref{construction} & $\left[\begin{smallmatrix} 0 & 1 & 0 & 1 & 1 & 0 &  \\ 0 & 0 & 1 & 2 & 1 & 0 &  \\  \end{smallmatrix}\right]$ &\{1, 3, 4, 5\}\\ 
$C_3^4 : C_4 \times C_2$ & 648 & true  &Theorem \ref{construction} &  &\{1, 3, 5, 6\}\\ 
$C_3^4 : C_8 \times C_2$ & 1296 & true  &Theorem \ref{construction} &  &\{1, 3, 5, 9\}\\ 
\hline
$C_3^4 : C_4$ & 324 & true  &Theorem \ref{construction} & $\left[\begin{smallmatrix} 0 & 1 & 0 & 1 & 1 & 0 &  \\ 0 & 0 & 1 & z^2 & 1 & 0 &  \\  \end{smallmatrix}\right]$ &\{1, 3, 4, 5\}\\ 
$C_3^4 : C_4 \times C_2$ & 648 & true  &Theorem \ref{construction} &  &\{1, 3, 5, 6\}\\ 
$C_3^4 : C_8 \times C_2$ & 1296 & true  &Theorem \ref{construction} &  &\{1, 3, 5, 9\}\\ 
\hline
$C_3^4 : C_4$ & 324 & true  &Theorem \ref{construction} & $\left[\begin{smallmatrix} 0 & 1 & 0 & 1 & z^2 & 0 &  \\ 0 & 0 & 1 & z^3 & 1 & 0 &  \\  \end{smallmatrix}\right]$ &\{1, 3, 4, 5\}\\ 
$C_3^4 : C_4 \times C_2$ & 648 & true  &Theorem \ref{construction} &  &\{1, 3, 5, 6\}\\ 
$C_3^4 : C_8 \times C_2$ & 1296 & true  &Theorem \ref{construction} &  &\{1, 3, 5, 9\}\\ 

\hline 
 $ C_2 \times \AGL(1,9)$   &144& true & \textbf{New}\\
 $\AGL(1,9)$ & 72 & $4 \times 2 + 4$ & \textbf{New}\\
\cline{1-4}
\end{tabular}
\caption{Known hemisystems of $\fish{9}$.}
\end{table}
\end{center}

\subsection{Linear, $\linear{11}$}
The full automorphism group of this \gq{} is $\PGammaU(4,11)$, which has order $2^{10} \times 3^{4} \times 5^{2} \times 11^{6} \times 37 \times 61$.

\begin{center}
\begin{table}[H]
\begin{tabular}{l|r|c|c|c|l}
Group & \multicolumn{1}{|c|}{Size}& SC &  Construction/Author(s) & $\ell$ & Subset ${\mathcal S}$\\   
\hline
$\mathsf{P\Sigma L}(2,121)$& 1771440& true & Theorem \ref{construction} &$\left[\begin{smallmatrix} 0 & 1 & 0 & 1 & 0 & 0 &  \\ 0 & 0 & 1 & 2 & 1 & 0 &  \\  \end{smallmatrix}\right]$ & \{ 1, 3, 4, 5, 8 \}\\ 
$C_{11}^2 : C_2 \times (C_3 : Q_8)$ & 5808 & true & Theorem \ref{construction} & & \{ 1, 3, 4, 5, 7 \}\\ 
$C_{11}^2 : C_2 \times (C_3 : Q_8)$ & 5808 & true & Theorem \ref{construction} & & \{ 1, 3, 4, 5, 9 \}\\ 
$C_{11}^2 : C_3 : Q_8$ & 2904 & true & Theorem \ref{construction} & & \{ 1, 3, 4, 5, 10 \}\\ 
$C_{11}^2 : C_3 : Q_8$ & 2904 & true & Theorem \ref{construction} & & \{ 1, 3, 4, 5, 11 \}\\ 
$C_{11}^2 : C_3 : Q_8$ & 2904 & true & Theorem \ref{construction} & & \{ 1, 3, 4, 5, 6 \}\\ 

\hline
$3.A_6.2$  &2160 & true    &\textbf{New}\\
$C_{333}: C_6$ & 1998& true &\textbf{New}, Section \ref{sec:metacyclic}\\
$2^4.A_5$ & 960 & true&\textbf{New}, Section \ref{sec:fixedgp}\\
$C_{12}^2:S_3$& 864 & true &\textbf{New}, Section \ref{sec:triangular}\\
 $C_{12}^2:S_3$& 864 & true &\textbf{New}, Section \ref{sec:triangular}\\
$C_{111}: C_6$ & 666& false & \textbf{New}, Section \ref{sec:metacyclic}\\
\cline{1-4}
\end{tabular}
\caption{Known hemisystems of $\linear{11}$.}
\label{tab:H311}
\end{table}
\end{center}

\subsection{Fisher-Thas-Walker-Kantor-Betten, $\ftwkb{11}$}

The full automorphism group of this \gq{} is $11^{1+4}\rtimes \mathrm{GL}(2,11)$ which has order $2^{4} \times 3 \times 5^{2} \times 11^{6}$. There are 20 hemisystems of Type I, listed in the Appendix\footnote{Due to its size, this Appendix is only included in the \textsc{arxiv} version of this paper}, and we do not know any other hemisystems in this \gq{}.

\subsection{Fisher, $\fish{11}$}

The full automorphism group of this \gq{} is $11^{1+4}:(C_5 \times (((C_3 \times (C_3 : C_4)) : Q_8) : C_2))$ which has order $2^{6} \times 3^{2} \times 5 \times 11^{5}$. There are 90 hemisystems of Type I, listed in the Appendix\footnotemark[3], and we know 12 further hemisystems listed in Table~\ref{tab:nonbgrfi11}.

{\small
\begin{center}
\begin{longtable}{|l|r|c|}
\hline
\multicolumn{1}{|c|}{Group} & \multicolumn{1}{c|}{Size}& \multicolumn{1}{c|}{Number}\\
\hline
$\AGL(1,11)$ &110& $6 \times 2$ \\
\hline
\caption{Non Type I hemisystems of $\fish{11}$}
\label{tab:nonbgrfi11}
\end{longtable}
\end{center}
}

\subsection{Penttila-Mondello, $\pentmon{11}$}

The full automorphism of this \gq{} is
$11^{1+4}\rtimes (C_5\times (C_3 \times \mathrm{SL}(2,3).2):2 )$ which has order $2^{5} \times 3^{2} \times 5 \times 11^{5}$.  There are 164 hemisystems of Type I, listed in the Appendix\footnotemark[3], and we know 36 further hemisystems listed in Table~\ref{tab:nonbgrmon11}.

{\small
\begin{center}
\begin{longtable}{|l|r|c|}
\hline
\multicolumn{1}{|c|}{Group} & \multicolumn{1}{c|}{Size}& \multicolumn{1}{c|}{Number}\\
\hline
$\AGL(1,11)$ &110& $18 \times 2$ \\
\hline
\caption{Non Type I hemisystems of $\pentmon{11}$}
\label{tab:nonbgrmon11}
\end{longtable}
\end{center}
}

%
%

\section{Open Problems}
\label{sec:probs}

We saw in Section \ref{section:construction} that in any infinite family of \gqs{} of order $(q^2,q)$ the number of hemisystems arising from Theorem~\ref{construction} grows exponentially in $q$. Hemisystems that do not arise from Theorem \ref{construction} are then of particular interest.

\begin{problem}
Does every flock \gq{} of order $(s^2,s)$ with $s \ge 7$ contain a hemisystem that does not arise from Theorem~\ref{construction}?
\end{problem}
We have found such hemisystems in all of the \gqs{} that we have examined with the exception of  the small cases ($\linear{3}$ and $\linear{5}$) and $\ftwkb{11}$.

Although we have outlined two possibilities for infinite families of hemisystems in $\linear{q}$ in Section \ref{sec:inffamilies}, we do not have any proven general constructions for hemisystems other than Theorem~\ref{construction}.

\begin{problem}
Find a natural construction for an infinite family of hemisystems (not of Type I) in $\linear{q}$ or in one of the known families of non-classical \gqs{}.
\end{problem}

By Theorem \ref{thm:stab}, a hemisystem coming from Theorem \ref{construction} is invariant under a particular elementary abelian group of order $q^2$ denoted by $T$.  However, we do not know if the converse is true.

\begin{problem}
Are there hemisystems invariant under the elementary abelian group $T$ of order $q^2$ described in Theorem~\ref{thm:stab} that do not arise from Theorem \ref{construction}? 
\end{problem}

At the other end of the symmetry spectrum, we currently do not know of any hemisystems with a trivial group. However this is not surprising, as almost all of our searches have assumed the existence of symmetries. 
\begin{problem}
 Is there a hemisystem with trivial group? 
\end{problem}
We expect a positive answer, although it may be challenging to find such a hemisystem.

Each hemisystem gives a strongly regular graph and the stabiliser of the hemisystem in the automorphism group of the \gq{} gives a group of automorphisms of the strongly regular graph. In all cases investigated so far, the automorphism group of the strongly regular graph is induced by the stabiliser of the hemisystem in the automorphism group of the \gq. It is not apparent why this should always be the case.
\begin{problem}
Is the full automorphism group of the strongly regular graph obtained from a hemisystem always induced by the stabiliser of the hemisystem in the automorphism group of the \gq?
\end{problem}

\begin{problem}
Are there hemisystems in different \gqs{} whose associated strongly regular graphs are isomorphic?
\end{problem}

\section*{Acknowledgements} 

The authors are extremely grateful to Simon Guest for his computational assistance.

\bibliographystyle{amsplain}
\providecommand{\bysame}{\leavevmode\hbox to3em{\hrulefill}\thinspace}
\providecommand{\MR}{\relax\ifhmode\unskip\space\fi MR }
\providecommand{\MRhref}[2]{%
  \href{http://www.ams.org/mathscinet-getitem?mr=#1}{#2}
}
\providecommand{\href}[2]{#2}

\appendix
\section{Tables of Type I hemisystems}\label{tables}

\subsection{Type I hemisystems of $\ftwkb{11}$}

{\small
\begin{center}
\begin{longtable}{l|r|c|c|l}
Group & \multicolumn{1}{|c|}{Size}& SC &  $\ell$ & Subset ${\mathcal S}$\\   
\hline
$C_{11}^2 : C_4$ & 484 & false  & $\left[\begin{smallmatrix} 0 & 1 & 0 & 0 & 1 & 0 &  \\ 0 & 0 & 1 & 2 & 0 & 0 &  \\  \end{smallmatrix}\right]$ &\{1, 3, 4, 5, 6\}\\ 
$C_{11}^2 : C_4$ & 484 & false  &  &\{1, 3, 4, 5, 7\}\\ 
$C_{11}^2 : C_{20}$ & 2420 & false  &  &\{1, 3, 4, 5, 8\}\\ 
$C_{11}^2 : C_4$ & 484 & false  &  &\{1, 3, 4, 5, 9\}\\ 
$C_{11}^2 : C_4$ & 484 & false  &  &\{1, 3, 4, 5, 10\}\\ 
$C_{11}^2 : C_4$ & 484 & false  &  &\{1, 3, 4, 5, 11\}\\ 
$C_{11}^2 : C_4$ & 484 & false  &  &\{1, 3, 4, 5, 12\}\\ 
$C_{11}^2 : C_4$ & 484 & false  &  &\{1, 3, 4, 6, 10\}\\ 
$C_{11}^2 : C_4$ & 484 & false  &  &\{1, 3, 4, 7, 10\}\\ 
$C_{11}^2 : C_{20}$ & 2420 & false  &  &\{1, 3, 4, 7, 12\}\\ 

\hline
\caption{Type I hemisystems of $\ftwkb{11}$.}
\label{tab:Type1FTWKB11}
\end{longtable}
\end{center}
}

\subsection{Type I hemisystems of $\fish{11}$}

{\small
\begin{center}
\begin{longtable}{l|r|c|c|l}
Group & \multicolumn{1}{|c|}{Size}& SC &  $\ell$ & Subset ${\mathcal S}$\\   
\hline
$C_{11}^2 : C_2$ & 242 & false    & $\left[\begin{smallmatrix} 0 & 1 & 0 & 0 & 1 & 0 &  \\ 0 & 0 & 1 & 1 & 0 & 0 &  \\  \end{smallmatrix}\right]$ & \{ 1, 3, 4, 5, 6 \}\\ 
$C_{11}^2 : C_4$ & 484 & false    &  & \{ 1, 3, 4, 5, 7 \}\\ 
$C_{11}^2 : C_{10}$ & 1210 & false    &  & \{ 1, 3, 4, 5, 8 \}\\ 
$C_{11}^2 : C_4$ & 484 & false    &  & \{ 1, 3, 4, 5, 9 \}\\ 
$C_{11}^2 : C_2$ & 242 & false    &  & \{ 1, 3, 4, 5, 10 \}\\ 
$C_{11}^2 : C_2$ & 242 & false    &  & \{ 1, 3, 4, 5, 11 \}\\ 
\hline
$C_{11}^2 : C_2$ & 242 & false    & $\left[\begin{smallmatrix} 0 & 1 & 0 & 0 & 1 & 0 &  \\ 0 & 0 & 1 & 2 & 0 & 0 &  \\  \end{smallmatrix}\right]$ & \{ 1, 3, 4, 5, 6 \}\\ 
$C_{11}^2 : C_4$ & 484 & false    &  & \{ 1, 3, 4, 5, 7 \}\\ 
$C_{11}^2 : C_{10}$ & 1210 & false    &  & \{ 1, 3, 4, 5, 8 \}\\ 
$C_{11}^2 : C_4$ & 484 & false    &  & \{ 1, 3, 4, 5, 9 \}\\ 
$C_{11}^2 : C_2$ & 242 & false    &  & \{ 1, 3, 4, 5, 10 \}\\ 
$C_{11}^2 : C_2$ & 242 & false    &  & \{ 1, 3, 4, 5, 11 \}\\ 
\hline
$C_{11}^2 : C_2$ & 242 & true    & $\left[\begin{smallmatrix} 0 & 1 & 0 & 0 & 1 & 0 &  \\ 0 & 0 & 1 & 10 & 0 & 0 &  \\  \end{smallmatrix}\right]$ & \{ 1, 3, 4, 5, 6 \}\\ 
$C_{11}^2 : C_4$ & 484 & true    &  & \{ 1, 3, 4, 5, 7 \}\\ 
$C_{11}^2 : C_{10}$ & 1210 & true    &  & \{ 1, 3, 4, 5, 8 \}\\ 
$C_{11}^2 : C_4$ & 484 & true    &  & \{ 1, 3, 4, 5, 9 \}\\ 
$C_{11}^2 : C_2$ & 242 & true    &  & \{ 1, 3, 4, 5, 10 \}\\ 
$C_{11}^2 : C_2$ & 242 & true    &  & \{ 1, 3, 4, 5, 11 \}\\ 
\hline
$C_{11}^2 : C_3 : Q_8$ & 2904 & false    & $\left[\begin{smallmatrix} 0 & 1 & 0 & 0 & 1 & 0 &  \\ 0 & 0 & 1 & 9 & 0 & 0 &  \\  \end{smallmatrix}\right]$ & \{ 1, 3, 4, 5, 6 \}\\ 
$C_{11}^2 : C_2 \times (C_3 : Q_8)$ & 5808 & false    &  & \{ 1, 3, 4, 5, 7 \}\\ 
$C_{11}^2 : C_5 \times (C_3 : Q_8)$ & 14520 & false    &  & \{ 1, 3, 4, 5, 8 \}\\ 
$C_{11}^2 : C_2 \times (C_3 : Q_8)$ & 5808 & false    &  & \{ 1, 3, 4, 5, 9 \}\\ 
$C_{11}^2 : C_3 : Q_8$ & 2904 & false    &  & \{ 1, 3, 4, 5, 10 \}\\ 
$C_{11}^2 : C_3 : Q_8$ & 2904 & false    &  & \{ 1, 3, 4, 5, 11 \}\\ 
\hline
$C_{11}^2 : C_4$ & 484 & false    & $\left[\begin{smallmatrix} 0 & 1 & 0 & 1 & 2 & 0 &  \\ 0 & 0 & 1 & 1 & 1 & 0 &  \\  \end{smallmatrix}\right]$ & \{ 1, 3, 4, 5, 6 \}\\ 
$C_{11}^2 : Q_8$ & 968 & false    &  & \{ 1, 3, 4, 5, 7 \}\\ 
$C_{11}^2 : C_{20}$ & 2420 & false    &  & \{ 1, 3, 4, 5, 8 \}\\ 
$C_{11}^2 : Q_8$ & 968 & false    &  & \{ 1, 3, 4, 5, 9 \}\\ 
$C_{11}^2 : C_4$ & 484 & false    &  & \{ 1, 3, 4, 5, 10 \}\\ 
$C_{11}^2 : C_4$ & 484 & false    &  & \{ 1, 3, 4, 5, 11 \}\\ 
\hline
$C_{11}^2 : C_4$ & 484 & false    & $\left[\begin{smallmatrix} 0 & 1 & 0 & 1 & 2 & 0 &  \\ 0 & 0 & 1 & 5 & 1 & 0 &  \\  \end{smallmatrix}\right]$ & \{ 1, 3, 4, 5, 6 \}\\ 
$C_{11}^2 : Q_8$ & 968 & false    &  & \{ 1, 3, 4, 5, 7 \}\\ 
$C_{11}^2 : C_{20}$ & 2420 & false    &  & \{ 1, 3, 4, 5, 8 \}\\ 
$C_{11}^2 : Q_8$ & 968 & false    &  & \{ 1, 3, 4, 5, 9 \}\\ 
$C_{11}^2 : C_4$ & 484 & false    &  & \{ 1, 3, 4, 5, 10 \}\\ 
$C_{11}^2 : C_4$ & 484 & false    &  & \{ 1, 3, 4, 5, 11 \}\\ 
\hline
$C_{11}^2 : C_4$ & 484 & false    & $\left[\begin{smallmatrix} 0 & 1 & 0 & 1 & 4 & 0 &  \\ 0 & 0 & 1 & 8 & 1 & 0 &  \\  \end{smallmatrix}\right]$ & \{ 1, 3, 4, 5, 6 \}\\ 
$C_{11}^2 : Q_8$ & 968 & false    &  & \{ 1, 3, 4, 5, 7 \}\\ 
$C_{11}^2 : C_{20}$ & 2420 & false    &  & \{ 1, 3, 4, 5, 8 \}\\ 
$C_{11}^2 : Q_8$ & 968 & false    &  & \{ 1, 3, 4, 5, 9 \}\\ 
$C_{11}^2 : C_4$ & 484 & false    &  & \{ 1, 3, 4, 5, 10 \}\\ 
$C_{11}^2 : C_4$ & 484 & false    &  & \{ 1, 3, 4, 5, 11 \}\\ 
\hline
$C_{11}^2 : C_4$ & 484 & false    & $\left[\begin{smallmatrix} 0 & 1 & 0 & 1 & 4 & 0 &  \\ 0 & 0 & 1 & 10 & 1 & 0 &  \\  \end{smallmatrix}\right]$ & \{ 1, 3, 4, 5, 6 \}\\ 
$C_{11}^2 : Q_8$ & 968 & false    &  & \{ 1, 3, 4, 5, 7 \}\\ 
$C_{11}^2 : C_{20}$ & 2420 & false    &  & \{ 1, 3, 4, 5, 8 \}\\ 
$C_{11}^2 : Q_8$ & 968 & false    &  & \{ 1, 3, 4, 5, 9 \}\\ 
$C_{11}^2 : C_4$ & 484 & false    &  & \{ 1, 3, 4, 5, 10 \}\\ 
$C_{11}^2 : C_4$ & 484 & false    &  & \{ 1, 3, 4, 5, 11 \}\\ 

\hline
\caption{Type I hemisystems of $\fish{11}$.}
\label{tab:Type1Fi11}
\end{longtable}
\end{center}
}

\subsection{Type I hemisystems of $\pentmon{11}$}

{\small
\begin{center}
\begin{longtable}{l|r|c|c|l}
Group & \multicolumn{1}{|c|}{Size}& SC &  $\ell$ & Subset ${\mathcal S}$\\ 
\hline
$C_{11}^2 : C_4$ & 484 & false  & $\left[\begin{smallmatrix} 0 & 1 & 0 & 0 & 0 & 0 &  \\ 0 & 0 & 1 & 1 & 0 & 0 &  \\  \end{smallmatrix}\right]$ & \{ 1, 3, 4, 5, 6 \}\\ 
$C_{11}^2 : Q_8$ & 968 & false  &  & \{ 1, 3, 4, 5, 7 \}\\ 
$C_{11}^2 : C_{20}$ & 2420 & false  &  & \{ 1, 3, 4, 5, 8 \}\\ 
$C_{11}^2 : Q_8$ & 968 & false  &  & \{ 1, 3, 4, 5, 9 \}\\ 
$C_{11}^2 : C_4$ & 484 & false  &  & \{ 1, 3, 4, 5, 10 \}\\ 
$C_{11}^2 : C_4$ & 484 & false  &  & \{ 1, 3, 4, 5, 11 \}\\ 
\hline
$C_{11}^2 : C_4$ & 484 & false  & $\left[\begin{smallmatrix} 0 & 1 & 0 & 0 & 0 & 0 &  \\ 0 & 0 & 1 & 2 & 0 & 0 &  \\  \end{smallmatrix}\right]$ & \{ 1, 3, 4, 5, 6 \}\\ 
$C_{11}^2 : Q_8$ & 968 & false  &  & \{ 1, 3, 4, 5, 7 \}\\ 
$C_{11}^2 : C_{20}$ & 2420 & false  &  & \{ 1, 3, 4, 5, 8 \}\\ 
$C_{11}^2 : Q_8$ & 968 & false  &  & \{ 1, 3, 4, 5, 9 \}\\ 
$C_{11}^2 : C_4$ & 484 & false  &  & \{ 1, 3, 4, 5, 10 \}\\ 
$C_{11}^2 : C_4$ & 484 & false  &  & \{ 1, 3, 4, 5, 11 \}\\ 
\hline
$C_{11}^2 : C_4$ & 484 & true  & $\left[\begin{smallmatrix} 0 & 1 & 0 & 0 & 0 & 0 &  \\ 0 & 0 & 1 & 5 & 0 & 0 &  \\  \end{smallmatrix}\right]$ & \{ 1, 3, 4, 5, 6 \}\\ 
$C_{11}^2 : Q_8$ & 968 & true  &  & \{ 1, 3, 4, 5, 7 \}\\ 
$C_{11}^2 : C_{20}$ & 2420 & true  &  & \{ 1, 3, 4, 5, 8 \}\\ 
$C_{11}^2 : Q_8$ & 968 & true  &  & \{ 1, 3, 4, 5, 9 \}\\ 
$C_{11}^2 : C_4$ & 484 & true  &  & \{ 1, 3, 4, 5, 10 \}\\ 
$C_{11}^2 : C_4$ & 484 & true  &  & \{ 1, 3, 4, 5, 11 \}\\ 
\hline
$C_{11}^2 : C_2$ & 242 & false  & $\left[\begin{smallmatrix} 0 & 1 & 0 & 0 & 0 & 0 &  \\ 0 & 0 & 1 & 10 & 0 & 0 &  \\  \end{smallmatrix}\right]$ & \{ 1, 3, 4, 5, 6 \}\\ 
$C_{11}^2 : C_2$ & 242 & false  &  & \{ 1, 3, 4, 5, 7 \}\\ 
$C_{11}^2 : C_{10}$ & 1210 & false  &  & \{ 1, 3, 4, 5, 8 \}\\ 
$C_{11}^2 : C_2$ & 242 & false  &  & \{ 1, 3, 4, 5, 9 \}\\ 
$C_{11}^2 : C_2$ & 242 & false  &  & \{ 1, 3, 4, 5, 10 \}\\ 
$C_{11}^2 : C_2$ & 242 & false  &  & \{ 1, 3, 4, 5, 11 \}\\ 
$C_{11}^2 : C_2$ & 242 & false  &  & \{ 1, 3, 4, 5, 12 \}\\ 
$C_{11}^2 : C_2$ & 242 & false  &  & \{ 1, 3, 4, 6, 10 \}\\ 
$C_{11}^2 : C_2$ & 242 & false  &  & \{ 1, 3, 4, 7, 10 \}\\ 
$C_{11}^2 : C_{10}$ & 1210 & false  &  & \{ 1, 3, 4, 7, 12 \}\\ 
\hline
$C_{11}^2 : C_2$ & 242 & false  & $\left[\begin{smallmatrix} 0 & 1 & 0 & 0 & 0 & 0 &  \\ 0 & 0 & 1 & 3 & 0 & 0 &  \\  \end{smallmatrix}\right]$ & \{ 1, 3, 4, 5, 6 \}\\ 
$C_{11}^2 : C_4$ & 484 & false  &  & \{ 1, 3, 4, 5, 7 \}\\ 
$C_{11}^2 : C_{10}$ & 1210 & false  &  & \{ 1, 3, 4, 5, 8 \}\\ 
$C_{11}^2 : C_4$ & 484 & false  &  & \{ 1, 3, 4, 5, 9 \}\\ 
$C_{11}^2 : C_2$ & 242 & false  &  & \{ 1, 3, 4, 5, 10 \}\\ 
$C_{11}^2 : C_2$ & 242 & false  &  & \{ 1, 3, 4, 5, 11 \}\\ 
\hline
$C_{11}^2 : C_2$ & 242 & false  & $\left[\begin{smallmatrix} 0 & 1 & 0 & 1 & 0 & 0 &  \\ 0 & 0 & 1 & 0 & 1 & 0 &  \\  \end{smallmatrix}\right]$ & \{ 1, 3, 4, 5, 6 \}\\ 
$C_{11}^2 : C_4$ & 484 & false  &  & \{ 1, 3, 4, 5, 7 \}\\ 
$C_{11}^2 : C_{10}$ & 1210 & false  &  & \{ 1, 3, 4, 5, 8 \}\\ 
$C_{11}^2 : C_4$ & 484 & false  &  & \{ 1, 3, 4, 5, 9 \}\\ 
$C_{11}^2 : C_2$ & 242 & false  &  & \{ 1, 3, 4, 5, 10 \}\\ 
$C_{11}^2 : C_2$ & 242 & false  &  & \{ 1, 3, 4, 5, 11 \}\\ 
\hline
$C_{11}^2 : C_2$ & 242 & false  & $\left[\begin{smallmatrix} 0 & 1 & 0 & 1 & 1 & 0 &  \\ 0 & 0 & 1 & 1 & 1 & 0 &  \\  \end{smallmatrix}\right]$ & \{ 1, 3, 4, 5, 6 \}\\ 
$C_{11}^2 : C_2$ & 242 & true  &  & \{ 1, 3, 4, 5, 7 \}\\ 
$C_{11}^2 : C_{10}$ & 1210 & false  &  & \{ 1, 3, 4, 5, 8 \}\\ 
$C_{11}^2 : C_2$ & 242 & true  &  & \{ 1, 3, 4, 5, 9 \}\\ 
$C_{11}^2 : C_2$ & 242 & false  &  & \{ 1, 3, 4, 5, 10 \}\\ 
$C_{11}^2 : C_2$ & 242 & false  &  & \{ 1, 3, 4, 5, 11 \}\\ 
\hline
$C_{11}^2 : Q_8$ & 968 & false  & $\left[\begin{smallmatrix} 0 & 1 & 0 & 1 & 1 & 0 &  \\ 0 & 0 & 1 & 4 & 1 & 0 &  \\  \end{smallmatrix}\right]$ & \{ 1, 3, 4, 5, 6 \}\\ 
$C_{11}^2 : C_2 \times Q_8$ & 1936 & false  &  & \{ 1, 3, 4, 5, 7 \}\\ 
$C_{11}^2 : C_5 \times Q_8$ & 4840 & false  &  & \{ 1, 3, 4, 5, 8 \}\\ 
$C_{11}^2 : C_2 \times Q_8$ & 1936 & false  &  & \{ 1, 3, 4, 5, 9 \}\\ 
$C_{11}^2 : Q_8$ & 968 & false  &  & \{ 1, 3, 4, 5, 10 \}\\ 
$C_{11}^2 : Q_8$ & 968 & false  &  & \{ 1, 3, 4, 5, 11 \}\\ 
\hline
$C_{11}^2 : C_6$ & 726 & false  & $\left[\begin{smallmatrix} 0 & 1 & 0 & 1 & 8 & 0 &  \\ 0 & 0 & 1 & 8 & 1 & 0 &  \\  \end{smallmatrix}\right]$ & \{ 1, 3, 4, 5, 6 \}\\ 
$C_{11}^2 : C_3 : C_4$ & 1452 & false  &  & \{ 1, 3, 4, 5, 7 \}\\ 
$C_{11}^2 : C_{30}$ & 3630 & false  &  & \{ 1, 3, 4, 5, 8 \}\\ 
$C_{11}^2 : C_3 : C_4$ & 1452 & false  &  & \{ 1, 3, 4, 5, 9 \}\\ 
$C_{11}^2 : C_6$ & 726 & false  &  & \{ 1, 3, 4, 5, 10 \}\\ 
$C_{11}^2 : C_6$ & 726 & false  &  & \{ 1, 3, 4, 5, 11 \}\\ 
\hline
$C_{11}^2 : C_2$ & 242 & false  & $\left[\begin{smallmatrix} 0 & 1 & 0 & 1 & 8 & 0 &  \\ 0 & 0 & 1 & 7 & 1 & 0 &  \\  \end{smallmatrix}\right]$ & \{ 1, 3, 4, 5, 6 \}\\ 
$C_{11}^2 : C_4$ & 484 & false  &  & \{ 1, 3, 4, 5, 7 \}\\ 
$C_{11}^2 : C_{10}$ & 1210 & false  &  & \{ 1, 3, 4, 5, 8 \}\\ 
$C_{11}^2 : C_4$ & 484 & false  &  & \{ 1, 3, 4, 5, 9 \}\\ 
$C_{11}^2 : C_2$ & 242 & false  &  & \{ 1, 3, 4, 5, 10 \}\\ 
$C_{11}^2 : C_2$ & 242 & false  &  & \{ 1, 3, 4, 5, 11 \}\\ 
\hline
$C_{11}^2 : C_2$ & 242 & false  & $\left[\begin{smallmatrix} 0 & 1 & 0 & 1 & 5 & 0 &  \\ 0 & 0 & 1 & 7 & 1 & 0 &  \\  \end{smallmatrix}\right]$ & \{ 1, 3, 4, 5, 6 \}\\ 
$C_{11}^2 : C_4$ & 484 & false  &  & \{ 1, 3, 4, 5, 7 \}\\ 
$C_{11}^2 : C_{10}$ & 1210 & false  &  & \{ 1, 3, 4, 5, 8 \}\\ 
$C_{11}^2 : C_4$ & 484 & false  &  & \{ 1, 3, 4, 5, 9 \}\\ 
$C_{11}^2 : C_2$ & 242 & false  &  & \{ 1, 3, 4, 5, 10 \}\\ 
$C_{11}^2 : C_2$ & 242 & false  &  & \{ 1, 3, 4, 5, 11 \}\\ 
\hline
$C_{11}^2 : C_3 : C_4$ & 1452 & false  & $\left[\begin{smallmatrix} 0 & 1 & 0 & 1 & 10 & 0 &  \\ 0 & 0 & 1 & 9 & 1 & 0 &  \\  \end{smallmatrix}\right]$ & \{ 1, 3, 4, 5, 6 \}\\ 
$C_{11}^2 : C_3 : Q_8$ & 2904 & false  &  & \{ 1, 3, 4, 5, 7 \}\\ 
$C_{11}^2 : C_5 \times (C_3 : C_4)$ & 7260 & false  &  & \{ 1, 3, 4, 5, 8 \}\\ 
$C_{11}^2 : C_3 : Q_8$ & 2904 & false  &  & \{ 1, 3, 4, 5, 9 \}\\ 
$C_{11}^2 : C_3 : C_4$ & 1452 & false  &  & \{ 1, 3, 4, 5, 10 \}\\ 
$C_{11}^2 : C_3 : C_4$ & 1452 & false  &  & \{ 1, 3, 4, 5, 11 \}\\ 
\hline
$C_{11}^2 : C_3 : C_4$ & 1452 & false  & $\left[\begin{smallmatrix} 0 & 1 & 0 & 2 & 2 & 0 &  \\ 0 & 0 & 1 & 0 & 2 & 0 &  \\  \end{smallmatrix}\right]$ & \{ 1, 3, 4, 5, 6 \}\\ 
$C_{11}^2 : C_3 : Q_8$ & 2904 & false  &  & \{ 1, 3, 4, 5, 7 \}\\ 
$C_{11}^2 : C_5 \times (C_3 : C_4)$ & 7260 & false  &  & \{ 1, 3, 4, 5, 8 \}\\ 
$C_{11}^2 : C_3 : Q_8$ & 2904 & false  &  & \{ 1, 3, 4, 5, 9 \}\\ 
$C_{11}^2 : C_3 : C_4$ & 1452 & false  &  & \{ 1, 3, 4, 5, 10 \}\\ 
$C_{11}^2 : C_3 : C_4$ & 1452 & false  &  & \{ 1, 3, 4, 5, 11 \}\\ 

\hline
\caption{Type I hemisystems of $\pentmon{11}$}
\label{tab:mon11}
\end{longtable}
\end{center}
}

\end{document}